\documentclass[10pt,reqno]{amsart}
\usepackage{amssymb}
\usepackage{amsmath}
\usepackage{mathrsfs}
\usepackage{amssymb, amsmath, amsfonts}

\usepackage{mathabx}

\usepackage{color}

\usepackage[hidelinks]{hyperref}
\allowdisplaybreaks
\usepackage[numbers,sort&compress]{natbib}

\makeatletter
\def\tank#1{\protected@xdef\@thanks{\@thanks
 \protect\footnotetext[0]{#1}}}
\def\bigfoot{

 \@footnotetext}
\makeatother

\newcommand{\ea}{\end{array}}

\allowdisplaybreaks
\numberwithin{equation}{section}
\newtheorem{theorem}{Theorem}[section]
\newtheorem{lemma}{Lemma}[section]
\newtheorem{proposition}{Proposition}[section]

\newtheorem{corollary}{Corollary}[section]
\newtheorem{remark}{Remark}[section]

\newtheorem{condition}{Condition}[section]

\def\beq{\begin{equation}}
\def\nneq{\end{equation}}

\def\bthm{\begin{theorem}}
\def\nthm{\end{theorem}}

\def\blem{\begin{lemma}}
\def\nlem{\end{lemma}}
\def\bprf{\begin{proof}}
\def\nprf{\end{proof}}
\def\bprop{\begin{prop}}
\def\nprop{\end{prop}}
\def\brmk{\begin{rem}}
\def\nrmk{\end{rem}}

\def\bexa{\begin{exa}}
\def\nexa{\end{exa}}
\def\bcor{\begin{cor}}
\def\ncor{\end{cor}}

\def\e{\varepsilon}

\def\RR{\mathbb{R}}
\def\RR{\mathbb{R}}

\def\EE{\mathbb{E}}

\def\cF{\mathcal{F}}

\def\cD{\mathcal{D}}

\def\cN{\mathcal{N}}

\newcommand\HH{\mathcal H}

\def\e{{\varepsilon}}

\newcommand{\lc}{\left(}
\newcommand{\rc}{\right)}
\newcommand{\lk}{\left[}
\newcommand{\rk}{\right]}
\newcommand{\lt}{\left }
\newcommand{\rt}{\right}

\title[]{ Temporal properties of the stochastic fractional heat equation  with  rough dependence in space}

\author[B. Zhang]{Beibei Zhang}
    \address[]{Beibei Zhang, School of Mathematics and Statistics, Suzhou University of Technology, Changshu, Jiangsu, 215500,  China.}
    \email{zhangbb@whu.edu.cn}

\author[B. Qian]{Bin Qian}
   \address[]{Bin Qian, School of Mathematics and Statistics, Suzhou University of Technology, Changshu, Jiangsu, 215500,  China.}
    \email{binqiancn@126.com}

\date{}

\begin{document}
\maketitle

 \noindent {\bf Abstract:}

This paper investigates the nonlinear stochastic fractional heat equation  driven by a Gaussian noise that is white in time and fractional in space with a Hurst parameter $H \in \big(\frac{3-\alpha}{4}, \frac{1}{2}\big)$. Specifically, the driving operator is the fractional Laplacian of order $\alpha/2 \in (1/2, 1)$. We characterize the asymptotic behavior of the temporal increment $u(t+\varepsilon,x)-u(t,x)$ for fixed $t\ge 0$ and $x\in\mathbb{R}$ as $\varepsilon\downarrow 0$. Utilizing these precise asymptotic estimates, we establish Khinchin's and Chung's laws of the iterated logarithm for the temporal process $t \mapsto u(t,x)$.

 \vskip0.3cm
 \noindent{\bf Keywords:} {Stochastic heat equation; Fractional Brownian motion; Law of the iterated logarithm; Rough noise}
 \vskip0.3cm

\noindent {\bf MSC: }
 {60H15; 60G17; 60G22}
\vskip0.3cm

\section{Introduction  }
In this paper, we investigate the following nonlinear stochastic fractional heat equation (SFHE):
\begin{equation}\label{SFHE}
  \frac{\partial u(t,x)}{\partial t}
  =
  -(-\Delta)^{\frac{\alpha}{2}}u(t,x)
  +\sigma(u(t,x))\dot{W}(t,x),
  \quad t>0,\ x\in\mathbb{R},
\end{equation}
with a given initial condition $u_0$. Here,
$-(-\Delta)^{\frac{\alpha}{2}}$ denotes the fractional Laplacian of order
$\frac{\alpha}{2}\in(\frac12,1)$, and $\dot W$ is a Gaussian noise which is white in time and fractional in space with Hurst parameter $H\in\left(\frac{3-\alpha}{4},\frac12\right)$.
More precisely, the covariance structure of the underlying Gaussian field is given by
\begin{equation}\label{CovW}
\mathbb E[W(t,x)W(s,y)]
=
\frac12(s\wedge t)
\left(
|x|^{2H}+|y|^{2H}-|x-y|^{2H}
\right).
\end{equation}
The spatial covariance of the formal derivative
$\dot W(t,x)$ is described by the spectral measure $\mu(d\xi)=c_{1,1}|\xi|^{1-2H}d\xi$, where
\begin{equation}\label{e.c1}
c_{1,1}
=
\frac1{2\pi}
\Gamma(2H+1)\sin(\pi H).
\end{equation}

The analysis of stochastic partial differential equations driven by rough spatial noises has attracted considerable attention in recent years. When
$H<\frac12$, the spatial covariance becomes singular, and the construction of stochastic integrals requires refined techniques beyond the standard function-valued framework \cite{Dalang1999}.

 Nevertheless, well-posedness results for stochastic heat equations with rough spatial noises have been developed in a series of works, including affine equations
\cite{BJQ2015*,BJQ2016*}
and nonlinear equations
\cite{HHLNT2017,HHLNT2018,HW2022}.
In particular, Liu and Mao
\cite{LM2022}
established existence and uniqueness for equation \eqref{SFHE} under assumptions on the initial condition and nonlinear coefficient similar to those considered here.

After establishing well-posedness, a natural question is to understand the fine sample-path behavior of the solution. In particular, the temporal regularity and local oscillation properties of stochastic heat equations have been extensively studied. A related approximation principle was developed by Khoshnevisan et al.
\cite{KSXZ2013}
for differential equations driven by a very rough fractional Brownian motion (fBm). Their approach shows that, for fixed $x\in\mathbb R$, the temporal increment $u(t+\varepsilon,x)-u(t,x)$ can be approximated by the increment of a  fBm. More precisely, the local behavior of the solution is governed by a Gaussian process with the same covariance structure as an appropriate fBm, while the remaining term is smoother.

This approximation principle has been further extended to several related models. Das
\cite{Das2022*}
applied this idea to the Kardar--Parisi--Zhang equation, while Wang and Xiao
\cite{WX2024}
considered SFHEs with spatially correlated noises satisfying
$H>\frac12$.
More recently, Qian et al.
\cite{QianWAngXiao2026}
studied the standard stochastic heat equation driven by rough spatial noises. However, the temporal local behavior of SFHEs driven by rough spatial noises remains insufficiently understood. The main difficulty arises from the interplay between the nonlocal nature of the fractional Laplacian and the singularity of the spatial covariance structure, which makes a direct application of existing arguments difficult.

The aim of this paper is to further investigate the temporal local behavior of the nonlinear SFHE in the rough spatial regime
$\alpha\in(1,2)$ and $H\in\left(\frac{3-\alpha}{4},\frac12\right)$.
The restriction on $H$ arises naturally from the integrability properties of the fractional Sobolev seminorm associated with the noise. Under this condition, the effective temporal Hurst parameter
$\frac{\widetilde H}{2}=\frac{\alpha+2H-2}{2\alpha}$ is positive and determines the scaling behavior of the dominant temporal fluctuations.

The main results of this paper can be summarized as follows. We obtain an approximation estimate for the temporal increment of the nonlinear solution, showing that the leading-order fluctuation can be described by the corresponding linear stochastic convolution multiplied by the coefficient $\sigma(u(t,x))$. Furthermore, by combining this approximation with the Gaussian decomposition of the linear solution, we derive Khinchin's and Chung's laws of the iterated logarithm for the temporal process $t\mapsto u(t,x)$, showing that its local oscillation behavior is governed by the same scaling as a fractional Brownian motion with index $\widetilde H/2$.

The proof is based on a careful decomposition of the temporal increment into several stochastic convolution terms. We first analyze the contribution from the stochastic integral over a short time interval and then approximate the historical part by replacing the nonlinear coefficient with its value at $(t,x)$. The main challenge is to obtain sufficiently sharp estimates for the resulting approximation error under the singular spatial covariance structure. Our analysis relies on a combination of fractional Green function estimates, fractional Sobolev estimates, and scaling properties of the stable heat kernel.

For the linear equation
\begin{equation}\label{eq SFHE linear}
\frac{\partial v(t,x)}{\partial t}
=
-(-\Delta)^{\frac{\alpha}{2}}v(t,x)
+\dot W(t,x),
\qquad
v(0,\cdot)=0,
\end{equation}
the temporal process admits a decomposition into a fractional Brownian motion and a smooth Gaussian remainder. More precisely, for fixed $x\in\mathbb R$,
\begin{equation}\label{eq decom}
t\mapsto
\kappa B_t^{\widetilde H/2}-v(t,x)
\end{equation}
has a version which is infinitely differentiable on $(0,\infty)$, where
\begin{equation}\label{eq constant 1}
 \kappa:=\left(  \frac{ \Gamma(2\widetilde{H})}{ \Gamma(\widetilde{H})} \right)^{\frac12}.
 \end{equation}
Therefore, the local behavior of the nonlinear solution can be transferred from the linear Gaussian process to the corresponding fractional Brownian motion.

The rest of the paper is organized as follows. Section 2 introduces the stochastic integration framework, properties of the fractional heat kernel, and basic estimates for the mild solution. Section 3 presents the main approximation theorem and its applications to the laws of the iterated logarithm. Section 4 is devoted to the proof of the main result. Additional technical estimates are collected in the Appendix.

\section{Preliminaries}\label{lemmas}

\subsection{Covariance structure and stochastic integration}

 Recall  some notations from \cite{HHLNT2017} and \cite{HW2022}. Denote by  $ \cD=\cD(\RR) $  the space of   {real-valued infinitely differentiable} functions with compact support on $\mathbb{R}$. The Fourier transform of a function $f\in\cD$ is defined as
 $$
  \cF f(\xi):=\int_{\RR} e^{-i\xi x}f(x) dx.
$$

 Let $\mathcal H$  be  the Hilbert space obtained by completing $\mathcal D(\RR) $ with respect to the following scalar  product: for $\forall \phi,\psi\in \mathcal D(\mathbb R)$,
	  \begin{equation}\label{eq H product}
\begin{split}
     \langle\phi,\psi\rangle_{\mathcal H}
     =&\,c_{1,1}\int_{\RR} \cF \phi(\xi) \overline{\cF \psi(\xi)}  \cdot |\xi|^{1-2H}d\xi\\
     =&\, H\left(\frac12-H\right)\int_{\RR^2}[\phi(x+y)-\phi(x)]\cdot [\psi(x+y)-\psi(x)]\cdot |y|^{2H-2} dxdy,
        \end{split}
   \end{equation}
  where $H\in (\frac14 , \frac 12)$, $c_{1,1}$ is given in \eqref{e.c1}.

 Let   $(\Omega,\cF,\mathbb P)$ be a complete probability space and   { $\mathcal{D}(\mathbb{R}_{+}\times \mathbb{R})$   the space of real-valued infinitely differentiable functions with compact support on $\mathbb{R}_{+}\times \mathbb{R}$.}
The noise $\dot W$ is  a zero-mean Gaussian family $\{W(\phi), \phi\in\cD(\RR_{+}\times \RR)\}$  with the covariance structure given by
 \begin{equation}\label{CovStru}
   \EE\big[ W(\phi)W(\psi)\big]=c_{1,1}\int_{\RR_{+}\times \RR} \cF \phi(s,\xi) \overline{\cF \psi(s,\xi)}\cdot |\xi|^{1-2H}d\xi ds,
 \end{equation}
 where $H\in (\frac14 , \frac 12)$, $c_{1,1}$ is given in \eqref{e.c1}.  For any $t\ge0$, let $\cF_t$ be the filtration generated by $W$, that is
$$
 \cF_t:=\sigma\big\{W(\phi): \phi\in \cD([0,t]\times\RR)\big\},
$$
where $\cD([0,t]\times\RR)$ is  the space of real-valued infinitely differentiable functions on $[0,t]\times\RR$.

\begin{proposition}(\cite[Proposition 2.3]{HHLNT2017})\label{prop 2.3}
   Let $\Lambda_{H}$ be the space of predictable processes $g$ defined on $\RR_{+}\times\RR$ such that almost surely $g\in\HH$ and $\EE[\|g\|_{\HH}^2]<\infty$.  Then,  the following items hold.
   \begin{itemize}
       \item[(i).]
  The space of the elementary processes
    defined in \cite[Definition 2.2]{HHLNT2017} is dense in $\Lambda_{H}$;
      \item[(ii).]
 For any  $g\in\Lambda_{H}$, the stochastic integral $\int_{\RR_{+}}\int_{\RR} g(s,x)W(ds,dx)$ is defined  as the $L^2(\Omega)$-limit of  Riemann sums along    elementary processes approximating $g$
in $\Lambda_H$, and we have
    \begin{equation}\label{Isometry}
     \EE\lk\lc\int_{\RR_{+}}\int_{\RR} g(s,x)W(ds,dx)\rc^2\rk=\EE\big[\|g\|_{\HH}^2\big].
    \end{equation}
\end{itemize}
\end{proposition}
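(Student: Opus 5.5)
The plan is to follow the standard construction of a Walsh/Dalang-type stochastic integral, adapted to the space $\HH$ defined by \eqref{eq H product}. I will proceed in three steps: prove the isometry for elementary processes, then establish the density in (i), and finally extend the integral by continuity to get (ii).

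\textbf{Elementary processes.} An elementary process has the form $g(s,x)=\sum_{j} X_j \mathbf 1_{(a_j,b_j]}(s)\phi_j(x)$, where $X_j$ is bounded and $\cF_{a_j}$-measurable and $\phi_j\in\cD(\RR)$. For such $g$ I define the integral as the Riemann sum $\sum_j X_j W(\mathbf 1_{(a_j,b_j]}\otimes\phi_j)$. To check \eqref{Isometry}, I expand the square and take conditional expectations with respect to $\cF_{a_j\vee a_k}$.
\begin{itemize}
\item Cross terms over disjoint time intervals vanish: the later Gaussian increment is independent of the earlier filtration and has mean zero.
\item Diagonal terms produce $\EE[X_jX_k]\,(b-a)\langle\phi_j,\phi_k\rangle_{\HH}$ by \eqref{CovStru}.
\end{itemize}
Summing these gives exactly $\EE\|g\|^2$ in the space-time version of the $\HH$-norm (the time variable enters through $L^2(ds)$). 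I will first refine the partition so that all the time intervals are either equal or disjoint.

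\textbf{Density, item (i).} Given $g\in\Lambda_H$, I approximate in three stages.
\begin{enumerate}
\item Truncate and mollify in space. Convolving with $\rho_\epsilon$ multiplies $\cF g$ by $\cF\rho_\epsilon\to1$, with $|\cF\rho_\epsilon|\le1$, so by dominated convergence in the spectral form of \eqref{eq H product} the mollified process converges in norm.
\item Approximate in time by Steklov averages over the preceding dyadic interval, $g_n(s)=2^n\int_{(k-1)2^{-n}}^{k2^{-n}}g(r)\,dr$ for $s\in(k2^{-n},(k+1)2^{-n}]$. These are predictable and converge in $L^2(ds;\HH)$ by the Lebesgue differentiation argument in the Bochner space.
\item Expand the spatial part in a countable family of $\cD$ functions. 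Since $\HH$ is the completion of $\cD$ this is possible, with coefficients that are $\cF_{k2^{-n}}$-measurable, which I then truncate.
\end{enumerate}

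\textbf{Extension, item (ii).} The isometry makes the map from elementary processes to $L^2(\Omega)$ uniformly continuous. It therefore extends uniquely to $\Lambda_H$, the extension is independent of the approximating sequence, and \eqref{Isometry} passes to the limit.

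\textbf{Main obstacle.} I expect the delicate step to be density. Because $H<\frac12$, elements of $\HH$ may fail to be functions, and membership ``$g\in\HH$'' is defined through the difference form with kernel $|y|^{2H-2}$. Mollification is fine in Fourier, but controlling the truncation requires a cutoff $\chi_R$ with $\|g\chi_R-g\|_{\HH}\to0$. For this I would use the difference representation, splitting $(g\chi_R)(x+y)-(g\chi_R)(x)$ into $\chi_R$ times the increment of $g$ plus $g$ times the increment of $\chi_R$. The second part is bounded using $|\chi_R(x+y)-\chi_R(x)|\le\min(1,|y|/R)$ and the integrability of $|y|^{2H-2}\min(1,|y|^2/R^2)$, which tends to zero as $R\to\infty$ provided $g\in L^2$. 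This forces me to first reduce to processes that are also in $L^2(\RR)$, as is done in \cite{HHLNT2017}.
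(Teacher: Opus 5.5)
The paper gives no proof of its own here; it imports the statement from \cite{HHLNT2017}. Your skeleton (isometry on elementary processes, then density, then extension by continuity) is the standard one behind that result. Your isometry computation and the shifted Steklov averages in time, plus a trivial truncation to $[0,T]$, are fine.

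The concrete gap is in (i) as stated. The elementary processes of \cite[Definition 2.2]{HHLNT2017} have spatial factors $\mathbf 1_{(h_j,l_j]}(x)$, not $\phi_j\in\cD$. Their integrals are the rectangular increments $W(b_i,l_j)-W(a_i,l_j)-W(b_i,h_j)+W(a_i,h_j)$, and these are the ``Riemann sums'' of (ii). Density of your $\cD$-based class does not give density of that class unless you also show that step functions are dense in $\HH$. (Indicators do lie in $\HH$, since $|\cF\mathbf 1_{(h,l]}(\xi)|\le\min\{l-h,2/|\xi|\}$.) This is easy to supply. For $\phi\in\cD$ set $\phi_n:=\sum_k\phi(k/n)\mathbf 1_{(k/n,(k+1)/n]}$, and split the difference form in \eqref{eq H product} into two regions:
\begin{itemize}
\item[] for $|y|\ge 1/n$, use $\|\phi-\phi_n\|_\infty\lesssim 1/n$ on a fixed compact set;
\item[] for $|y|<1/n$, note that $\phi_n(x+y)-\phi_n(x)\neq 0$ only on a set of $x$ of measure $\lesssim n|y|$, and there it has size $\lesssim 1/n$.
\end{itemize}
This gives $\|\phi-\phi_n\|_{\HH}^2\lesssim n^{-1-2H}$. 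Then run your third stage with a countable dense family of step functions.

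Second, the ``main obstacle'' is not an obstacle, and the step you propose for it is unsupported. Because $\HH$ is by construction the separable completion of $\cD$, your third stage handles the spatial approximation by itself. Take an $\cF_{k2^{-n}}$-measurable $\HH$-valued $Y$ and a countable dense family $\{\psi_m\}$. Set $m_N(\omega):=\min\{m:\|Y(\omega)-\psi_m\|_{\HH}<1/N\}$ and use $\sum_{m\le M}\mathbf 1_{\{m_N=m\}}\psi_m$. These have bounded $\cF_{k2^{-n}}$-measurable coefficients and converge to $Y$ in $L^2(\Omega;\HH)$ by dominated convergence. So your first stage (mollification and cutoff) should simply be dropped.

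As written, that first stage rests on a reduction ``to processes also in $L^2(\RR)$'' that you never justify, and it is not free. For $H<\frac12$ one has $\HH\cong\dot H^{1/2-H}(\RR)\hookrightarrow L^{1/H}(\RR)$. So, contrary to your remark, its elements are genuine functions; distributions appear only for $H>\frac12$. But they need not be square integrable: $(1+|x|)^{-\gamma}$ with $H<\gamma\le\frac12$ belongs to $\HH$ but not to $L^2(\RR)$. The only general way to approximate such elements by $L^2$ functions is the density of $\cD$ in $\HH$, which already finishes the argument without any cutoff.
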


  Let $(B,\| \cdot \|_B)$ be a Banach space  with the norm $\| \cdot \|_B$.  Let   $H\in(\frac{1}{4},\frac{1}{2})$ be a fixed number.
 For  any  function $f:\RR\rightarrow B$,  denote
 \begin{equation}\label{NBNorm}
   \cN_{\frac{1}{2}-H}^{B}f(x):=\lt(\int_{\RR}\|f(x+h)-f(x)\|_B^2\cdot |h|^{2H-2}dh\rt)^{\frac 12},
 \end{equation}
 if the above quantity is finite.
 When $B=\RR$, we abbreviate the notation $\cN_{\frac{1}{2}-H}^{\RR}f$  as  $\cN_{\frac{1}{2}-H}f$.
When $B=L^p(\Omega)$, we  denote $ \cN_{\frac{1}{2}-H}^{B}$ by $\cN_{\frac{1}{2}-H,\,p}$.

\subsection{The fractional  heat kernel and  the linear stochastic heat equation}

Let $G_{\alpha}(t, x)$ denote the heat kernel associated to the operator $-(-\Delta)^{\frac{\alpha}{2}}$ on $\mathbb{R}$, defined via its Fourier transform
\begin{equation}\label{Eq: HeatKernel}
\cF G_{\alpha}(t, \cdot)(\xi):= e^{-t\|\xi\|^{\alpha}},\,\,\,\,\xi\in \mathbb{R}
\end{equation}
for $\alpha\in(1,2]$. Let us recall some useful properties of the kernel $\{G_{\alpha}(t,  x)\}_{t>0,x\in \mathbb R}$.
     For details, we refer to \cite{ANT2022, ChenZhang2016*}.

    It is well-known that   $G_{\alpha}(t, \cdot)$ is the probability transition density function of a    $1$-dimensional
    stable process $\{L_t^{\alpha}\}_{t\ge0}$.   By the scaling property of $L_t^{\alpha}\overset{d}{=}t^{1/\alpha} L_1^{\alpha}$,
   it follows  that
   \begin{equation}\label{eq scaling}
    G_{\alpha}(t, x)=t^{-\frac{1}{\alpha}} G_{\alpha}\big(1, t^{-\frac{1}{\alpha}}x\big) \ \ \ \  \ \ \ (t>0,\, x\in \mathbb R).
  \end{equation}
  When $\alpha=2$, $\left\{L_t^{\alpha}\right\}_{t\ge0}$  is a $1$-dimensional Brownian motion, and $G_2(t, x)=\frac{1}{(4\pi t)^{1/2}}\exp\left\{-\frac{|x|^2}{4t}\right\}$. For simplicity, we assume that $\alpha\in (1,2)$ throughout the rest of this paper. The proof in the case of $\alpha=2$ is relatively simple and thus omitted.

 When $\alpha\in (1,2)$,  by \cite[Theorem 2.1]{BG1960}, there exist some finite positive constants $c_{2,1}$ and $c_{2,2}$
 such that  for all $t>0$ and $x \in \mathbb R$,
\begin{align}\label{eq Green 3}
c_{2,1}t\left(t^{1/\alpha}+ |x| \right)^{-1-\alpha}\le G_{\alpha}(t, x)\le c_{2,2}t\left(t^{1/\alpha}+|x| \right)^{-1-\alpha},
\end{align}
which entails, for $0\le \theta<\alpha+\frac{1}{2}$,  \begin{equation}\label{eq Green 4}
\int_{\mathbb R}G_{\alpha}^2(1,z)|z|^{2\theta}dz\le c_{2,3} \int_{0}^{\infty} (1+z)^{2\theta-2\alpha-2}dz<\infty.
\end{equation}

 Hence,   by \cite[(1.17)]{ChenZhang2016*}, we know that
 there exists a positive constant $c_{2,4}>0$ such that  for all $0<s< t\le T$ and $x, y\in \mathbb R$,
\begin{align}\label{eq grad temp}
 \left|G_{\alpha}(t, x) -G_{\alpha}(s, x) \right|
\le    c_{2,4}(t-s)\cdot \left(s^{1/\alpha}+|x| \right)^{-1-\alpha}
\le  \frac{c_{2,4}}{c_{2,2}} \frac{(t-s)}{s} G_{\alpha}(s, x).
\end{align}
By \cite[Lemma 2.2]{ChenZhang2016*}, one has that for every $T > 0$, there exists a constant $c_{2,5} > 0$
such that for all $0 < t \le T $ and $x, y\in \mathbb R$,
\begin{equation}\label{eq grad spatial}
\begin{split}
 \left|G_{\alpha}(t, x) -G_{\alpha}(t, y)  \right|
\le  &\,  c_{2,5}\left( \frac{|x-y|}{t^{1/\alpha}} \wedge 1\right)\cdot \left( G_{\alpha}(t, x) +G_{\alpha}(t,  y) \right).
\end{split}
\end{equation}
\begin{proposition}\label{prop green 1}
For $\theta\in [0,\frac{1-2H}{\alpha})$, we have
\begin{equation}\label{eq green ineq1}
\int_{\mathbb R}dy\int_{\mathbb R}dz\left|G_{\alpha}(1,y)-G_{\alpha}(1,z)\right|^2|y-z|^{2H-2}|y|^{\alpha\theta}<\infty.
\end{equation}
\end{proposition}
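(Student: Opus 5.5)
Substitute $z=y+h$ and split the $h$-integral into $|h|\le 1$ and $|h|>1$. This gives
\[
I=\int_{\mathbb R}\int_{\mathbb R}\left|G_{\alpha}(1,y+h)-G_{\alpha}(1,y)\right|^2|h|^{2H-2}|y|^{\alpha\theta}\,dh\,dy .
\]
The weight satisfies $|y|^{\alpha\theta}$ with $\alpha\theta<1-2H<\frac12$. So the weight grows slowly, and the point is that the heat kernel has enough decay to absorb it, by \eqref{eq Green 3}.

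\textbf{Region $|h|\le 1$.} Apply \eqref{eq grad spatial} with $t=1$. It gives
\[
|G_\alpha(1,y+h)-G_\alpha(1,y)|\le c|h|\bigl(G_\alpha(1,y+h)+G_\alpha(1,y)\bigr).
\]
Squaring produces the factor $|h|^{2}|h|^{2H-2}=|h|^{2H}$, which is integrable near $0$. It remains to bound $\int G_\alpha^2(1,y+h)|y|^{\alpha\theta}dy$ uniformly in $|h|\le1$. For this, use $|y|^{\alpha\theta}\le C(|y+h|^{\alpha\theta}+1)$ and \eqref{eq Green 4} with $2\theta'=\alpha\theta$, which is admissible since $\alpha\theta/2<\alpha+\frac12$. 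The term with $G_\alpha^2(1,y)$ is handled directly by \eqref{eq Green 4}.

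\textbf{Region $|h|>1$.} Use $(a-b)^2\le 2a^2+2b^2$.

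The term $\int\int_{|h|>1}G_\alpha^2(1,y)|h|^{2H-2}|y|^{\alpha\theta}\,dh\,dy$ is finite. The reason is that $\int_{|h|>1}|h|^{2H-2}dh<\infty$, since $2H-2<-1$, combined with \eqref{eq Green 4}.

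The other term is $\int\int_{|h|>1}G_\alpha^2(1,y+h)|h|^{2H-2}|y|^{\alpha\theta}\,dh\,dy$. Change variables to $w=y+h$, so $y=w-h$, and bound
\[
|w-h|^{\alpha\theta}\le C\bigl(|w|^{\alpha\theta}+|h|^{\alpha\theta}\bigr).
\]
The $|w|^{\alpha\theta}$ piece is treated exactly as the first term. The $|h|^{\alpha\theta}$ piece gives
\[
\int_{|h|>1}|h|^{2H-2+\alpha\theta}\,dh\cdot\int G_\alpha^2(1,w)\,dw .
\]
This is finite precisely when $2H-2+\alpha\theta<-1$, that is, $\theta<\frac{1-2H}{\alpha}$, which is the hypothesis.

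\textbf{Main obstacle.} The only delicate step is this last large-$|h|$ interaction, where the weight is shifted by $h$. It is the sole place where the upper bound on $\theta$ enters, and the plan is to handle it with the elementary inequality $|w-h|^{\alpha\theta}\le C(|w|^{\alpha\theta}+|h|^{\alpha\theta})$, valid since $\alpha\theta\le1$. All other pieces follow from \eqref{eq Green 3}, \eqref{eq Green 4} and \eqref{eq grad spatial}.
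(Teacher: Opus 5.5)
Your proof is correct and follows essentially the same route as the paper. The paper applies \eqref{eq grad spatial} with the factor $(|h|\wedge 1)^2$ for all $h$, which is exactly your split into $|h|\le 1$ and $|h|>1$; it then shifts the weight via $|z+h|^{\alpha\theta}\le |z|^{\alpha\theta}+|h|^{\alpha\theta}$, collects four terms $I_1,\dots,I_4$, and closes with \eqref{eq Green 4}, where the hypothesis $\theta<\frac{1-2H}{\alpha}$ enters only through the integrability of $|h|^{2H+\alpha\theta-2}$ at infinity, as in your argument.
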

\begin{proof}
By a change of variables $y=z+h$ and $z= z$, and using \eqref{eq grad spatial}
\begin{align*}
&\int_{\mathbb R}dy\int_{\mathbb R}dz\left|G_{\alpha}(1,y)-G_{\alpha}(1,z)\right|^2|y-z|^{2H-2}|y|^{\alpha\theta}\\
=&\int_{\mathbb R}dh\int_{\mathbb R}dz\left|G_{\alpha}(1,z+h)-G_{\alpha}(1,z)\right|^2|h|^{2H-2}|z+h|^{\alpha\theta}
\leq 2 c_{2,5}\left(I_{1}+I_{2}+I_{3}+I_{4}\right),
\end{align*}
where
\begin{align*}
I_{1}=:& \int_{\mathbb R}dh\int_{\mathbb R}dz \left(h\wedge 1\right)^2|h|^{2H-2} G_{\alpha}(1,z+h)^2|z|^{\alpha\theta},
I_{2}=:\int_{\mathbb R}dh\int_{\mathbb R}dz \left(h\wedge 1\right)^2|h|^{2H+\alpha\theta-2} G_{\alpha}(1,z+h)^2,\\
I_{3}=:&\int_{\mathbb R}dh\int_{\mathbb R}dz \left(h\wedge 1\right)^2|h|^{2H-2}G_{\alpha}(1,z)^2|z|^{\alpha\theta},
I_{4}=:\int_{\mathbb R}dh\int_{\mathbb R}dz \left(h\wedge 1\right)^2|h|^{2H+\alpha\theta-2} G_{\alpha}(1,z)^2.
\end{align*}
By a change of variables $z=z-h$ and \eqref{eq Green 4}, for $0\le\theta<\frac{1-2H}{\alpha}$, we have
\begin{align*}
I_1
\le&c_{2,6}\left[\int_{\mathbb R}\left(h\wedge 1\right)^2|h|^{2H-2}dh+\int_{\mathbb R}\left(h\wedge 1\right)^2|h|^{2H+\alpha\theta-2}dh\right]
<\infty.
\end{align*}
From the above proof, we can see immediately $I_3<\infty$. Similarly, we have
\begin{align*}
I_2=\int_{\mathbb R} \left(h\wedge 1\right)^2|h|^{2H+\alpha\theta-2} dh\int_{\mathbb R}G_{\alpha}(1,z+h)^2 dz<\infty, \ \mbox{and}\ I_4<\infty.
\end{align*}
In conclusion, the proof is now complete.\end{proof}

\subsection{Mild solution}

\begin{condition}\label{cond A} Assume that for equation \eqref{SFHE} the following conditions hold:
 \begin{itemize}
 \item[(A1).] For $H\in (\frac{3-\alpha}{4},  \frac{1}{2})$ and for some $\beta_0> \frac{1}{2}-H$ and some   $p_0>\max \left(\frac{2(\alpha+1)}{\alpha+4H-3}, \frac{1}{\beta_0+H-{1}/{2}}\right)$, the initial condition $u_0$ is in $L^{p_0}(\RR)\cap L^\infty(\RR)$ and satisfies
 \begin{equation}\label{eq217}
   \sup_{x\in\RR}\mathcal{N}_{\beta_0}u_0(x)+ \left(\int_{\RR}\|u_0(\cdot)-u_0(\cdot+h)\|^2_{L^{p_0}(\RR)}|h|^{2H-2}dh\right)^{\frac12} < \infty\,.
 \end{equation}
 \item[(A2).] $\sigma$ is differentiable, the derivative of $\sigma$ is Lipschitz and $\sigma(0)=0$.
 \end{itemize}
\end{condition}

\begin{theorem}\cite[Theorems 2 and 3]{LM2022}\label{them existence}
 Under Condition \ref{cond A}, for any $p\geq p_0$, there exists a solution $u$ to \eqref{SFHE}  satisfying the following moment bounds:
    \begin{equation}\label{eq bound}
\sup_{ t\in[0,T],x\in \mathbb R}\|u(t,x)\|_{L^p(\Omega)}<\infty,\ \ \  \ \ \ \sup_{t\in[0,T],x\in \mathbb R}\mathcal N_{1/2-H, p} u(t,x)<\infty.
\end{equation}
 In addition, if   the initial condition $u_0$ is H\"older continuous with order $\beta_0$, then we have
 \begin{equation}\label{eq Holder}
\|u(t,x)-u(s,y)\|_{L^p(\Omega)} \leq c_{2,7} \left(|t-s|^{\vartheta_0}+ |x-y|^{\alpha\vartheta_0}\right),
\end{equation}
for all $s, t \in [0,T]$ and $x,y \in \RR$ with $\vartheta_0:=\frac{\alpha+2H-2}{2\alpha}\wedge \frac {\beta_0}{\alpha}$.
   \end{theorem}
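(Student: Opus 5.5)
The statement is cited from \cite[Theorems 2 and 3]{LM2022}. I would reprove it with the standard Picard-iteration scheme for rough noise, as in \cite{HHLNT2017,HW2022}, adapted to the stable kernel $G_\alpha$.

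\textbf{Set-up.} Write the mild formulation
\[
u(t,x)=G_\alpha(t)*u_0(x)+\int_0^t\int_{\RR}G_\alpha(t-s,x-y)\sigma(u(s,y))W(ds,dy).
\]
Work in the space of random fields with finite norm
\[
\|u\|_{\mathcal Z_T^p}:=\sup_{t\le T,\,x}\|u(t,x)\|_{L^p(\Omega)}+\sup_{t\le T,\,x}\mathcal N_{1/2-H,p}u(t,x).
\]
Because $H<\frac12$, the $L^p$ bound of the stochastic convolution cannot be closed in $\sup\|u\|_{L^p}$ alone. By the Burkholder inequality and the second expression in \eqref{eq H product}, $\|\int\int G\sigma(u)\,dW\|_p^2$ is bounded by a constant times the sum $J_1+J_2$. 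Here
\[
J_1=\int_0^t\!\int\!\!\int |G_\alpha(t-s,x-y-h)-G_\alpha(t-s,x-y)|^2\|\sigma(u(s,y))\|_p^2|h|^{2H-2}\,dh\,dy\,ds,
\]
and
\[
J_2=\int_0^t\!\int G_\alpha^2(t-s,x-y)\big(\mathcal N_{1/2-H,p}\sigma(u)(s,y)\big)^2\,dy\,ds.
\]

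\textbf{Time singularities.} By \eqref{eq scaling} and Proposition \ref{prop green 1} with $\theta=0$, the kernel factor in $J_1$ scales like $(t-s)^{(2H-2)/\alpha}$. The factor $\int G^2$ in $J_2$ scales like $(t-s)^{-1/\alpha}$. Both are integrable because $\alpha>1$ and $2-2H<\alpha$.

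\textbf{Closing the $\mathcal N$-seminorm.} The seminorm $\mathcal N_{1/2-H,p}$ of the stochastic term is estimated the same way, but it requires controlling $\|\sigma(u(s,y+h))-\sigma(u(s,y))\|_p$. Under (A2) alone this is not Lipschitz in the product sense needed. The difficulty is the term $\sigma(u(y+h))-\sigma(u(y))-\sigma(u(z+h))+\sigma(u(z))$. Using that $\sigma'$ is Lipschitz, it splits into a piece $\lesssim |u(y+h)-u(y)|\cdot|u(y)-u(z)|$ and a piece bounded by differences of differences. This yields a quadratic term requiring an extra $L^{2p}$ estimate on the spatial increment $\|u(s,y)-u(s,z)\|_{2p}$ with Hölder-type decay.

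\textbf{The main obstacle.} For this reason I would follow \cite{HHLNT2017}. First prove existence for a regularized noise, smoothed in space by convolution with a mollifier $\rho_\epsilon$. For that noise the solution exists by Dalang theory. Then establish uniform-in-$\epsilon$ bounds in $\mathcal Z_T^p$ together with a uniform spatial modulus $\sup_t\|u_\epsilon(t,x)-u_\epsilon(t,y)\|_p\lesssim |x-y|^{\gamma}$ for some $\gamma>\frac12-H$. The restriction $H>\frac{3-\alpha}{4}$ is exactly what makes the resulting fractional Gronwall inequalities close, since the exponent $(4H-3+\alpha)/\alpha$ must be positive. The lower bound on $p_0$ enters through Kolmogorov/BDG constants and the $L^{p_0}(\RR)$ part of \eqref{eq217}, which handles the weighted terms from the initial condition via Proposition \ref{prop green 1} with $\theta>0$. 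Finally, obtain tightness of $u_\epsilon$ in a space of continuous functions with weight, and identify the limit by passing to the limit in the weak formulation; this gives existence and \eqref{eq bound}.

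\textbf{Hölder estimate \eqref{eq Holder}.} Split $u(t,x)-u(s,y)$ into the initial-data part and the stochastic part. The initial-data part is bounded by $|t-s|^{\beta_0/\alpha}+|x-y|^{\beta_0}$ using $\beta_0$-Hölder continuity of $u_0$ and scaling. For the stochastic part, apply the BDG bound above to the kernel increments, using \eqref{eq grad temp} and \eqref{eq grad spatial} together with scaling. This gives rates $|t-s|^{(\alpha+2H-2)/(2\alpha)}$ and $|x-y|^{(\alpha+2H-2)/2}$, hence the exponent $\vartheta_0$.
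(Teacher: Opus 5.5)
\textbf{There is a genuine gap: your argument yields only a weak solution.} The paper does not prove this result itself; it imports it from \cite{LM2022}, whose argument follows \cite{HHLNT2017}. That scheme regularizes the noise, proves uniform moment and H\"older bounds, obtains tightness, proves pathwise uniqueness, and then concludes strong existence. Your plan reproduces the first half and stops before the uniqueness step. Tightness of $\{u_\epsilon\}$ gives convergence only in law along subsequences. Passing to the limit in the weak formulation then produces a \emph{weak} solution: a field on some new probability space, driven by some new noise with the law of $W$. The statement, and the way the paper uses it later, needs a mild solution on the given space, adapted to the filtration generated by the given $W$. Examples are $\sigma(u(t(\varepsilon),x))\in\mathcal F_{t(\varepsilon)}$ in Lemma~\ref{lem J12}, and the coupling of $u$ with the linear solution $v$ driven by the same $W$ in Theorem~\ref{thm main}. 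Getting such a solution requires pathwise uniqueness together with a Yamada--Watanabe or Gy\"ongy--Krylov argument (the latter applied to pairs $(u_\epsilon,u_{\epsilon'})$). Your proposal contains no uniqueness step at all.

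\textbf{You have attached (A2) to the wrong estimate.} The uniqueness step is exactly where (A2) is needed. The a priori bound on $\mathcal N_{\frac12-H,p}$ of the stochastic convolution for a single field involves no second-order differences of $\sigma$. Write $\Delta_hG(r,w):=G_\alpha(r,w+h)-G_\alpha(r,w)$. The BDG integrand then splits as $[\Delta_hG(t-s,x-y)-\Delta_hG(t-s,x-z)]\,\sigma(u(s,y))+\Delta_hG(t-s,x-z)\,[\sigma(u(s,y))-\sigma(u(s,z))]$, so Lipschitz $\sigma$ suffices. Pull out $\sup\|\sigma(u)\|_{L^p(\Omega)}^2$ and integrate in $h$ against $|h|^{2H-2}$. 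The first piece becomes $\int_0^t\int_{\mathbb R}e^{-2r|\xi|^\alpha}|\xi|^{2-4H}\,d\xi\,dr$, which is finite exactly when $H>\frac{3-\alpha}{4}$. Second-order differences appear only when you compare two solutions $u,\tilde u$ (or two Picard iterates), through $\sigma(u(s,y))-\sigma(\tilde u(s,y))-\sigma(u(s,z))+\sigma(\tilde u(s,z))$. With $\sigma'$ Lipschitz this is bounded by increments of $u-\tilde u$ plus products such as $|u(s,z)-\tilde u(s,z)|\,|u(s,y)-u(s,z)|$. Closing a Gronwall-type estimate for this is the delicate part, and it is what needs the extra $L^{2p}$/H\"older control you mention.

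Once strong existence with \eqref{eq bound} is secured this way, your derivation of \eqref{eq Holder} is fine. That derivation treats the initial-data part directly and applies BDG to the kernel increments, which is easiest via Plancherel.
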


\begin{remark}\label{remark1}
By Proposition A.1 in \cite{HHLNT2017}, Condition \ref{cond A} ensures that the initial function $u_0$ is H\"older continuous with order $\beta_0$. By Remark 2.1 in \cite{QianWAngXiao2026}, Condition \ref{cond A} automatically implies that \eqref{eq217} holds for all $p\geq p_0$.
\end{remark}

  \begin{proposition}\label{HBDG}   $($\cite[Proposition 3.2]{HHLNT2017}$)$
   Let $W$ be the Gaussian noise with the covariance \eqref{CovStru}, and let    $f\in\Lambda_H$  be a predictable random field. Then, we have   {that} for any $p\geq2$,
   \begin{equation}\label{eq BDG}
     \begin{split}
        \lt\|\int_{0}^{t}\int_{\RR}f(s,y) W(ds,dy)\rt\|_{L^p(\Omega)}
     \leq \sqrt{4p}c_{2,8}\lt(\int_{0}^{t}\int_{\RR}\lk\cN_{\frac 12-H,\,p}f(s,y)\rk^2dyds\rt)^{\frac 12},
     \end{split}
   \end{equation}
   where $c_{2,8}$ is a constant depending only on $H$ and
   $\cN_{\frac 12-H,\,p}f(s,y)$ denotes the application of $\cN_{\frac 12-H,\,p} $  to the spatial variable $y$.
 \end{proposition}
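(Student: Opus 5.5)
The natural approach is to treat $M_t:=\int_0^t\int_{\RR} f(s,y)\,W(ds,dy)$ as a continuous square-integrable martingale, apply the Burkholder--Davis--Gundy inequality, and then move the $L^{p}(\Omega)$-norm inside the time and space integrals with Minkowski's inequality.

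First I will prove the claim for elementary processes $f$ in the sense of \cite[Definition 2.2]{HHLNT2017}. For such $f$, $M_t$ is a finite sum of Gaussian increments multiplied by bounded $\cF$-measurable coefficients. It is therefore a continuous $(\cF_t)$-martingale. By the isometry \eqref{Isometry} and the white-in-time structure \eqref{CovStru}, its quadratic variation is
$$\langle M\rangle_t=\int_0^t\|f(s,\cdot)\|_{\HH}^2\,ds .$$
The BDG inequality with the sharp constant of order $\sqrt{p}$ (Carlen--Kr\'ee form, giving the factor $\sqrt{4p}$) yields
$$\|M_t\|_{L^p(\Omega)}\le \sqrt{4p}\,\big\|\langle M\rangle_t\big\|_{L^{p/2}(\Omega)}^{1/2}.$$

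Next I use the second expression in \eqref{eq H product}, the difference-quotient form of the $\HH$-norm:
$$\|f(s,\cdot)\|_{\HH}^2=H\big(\tfrac12-H\big)\int_{\RR^2}|f(s,y+h)-f(s,y)|^2|h|^{2H-2}\,dh\,dy .$$
Since $p/2\ge1$, Minkowski's integral inequality in $L^{p/2}(\Omega)$, applied to the integral in $(s,y,h)$, gives
$$\Big\|\int_0^t\|f(s)\|_{\HH}^2ds\Big\|_{L^{p/2}(\Omega)}\le H\big(\tfrac12-H\big)\int_0^t\!\!\int_{\RR^2}\big\||f(s,y+h)-f(s,y)|^2\big\|_{L^{p/2}(\Omega)}|h|^{2H-2}\,dh\,dy\,ds .$$
The integrand satisfies $\big\||\cdot|^2\big\|_{L^{p/2}}=\|\cdot\|_{L^p}^2$. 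Hence the right-hand side equals
$$H\big(\tfrac12-H\big)\int_0^t\int_{\RR}\big[\cN_{\frac12-H,\,p}f(s,y)\big]^2\,dy\,ds$$
by \eqref{NBNorm}. This gives \eqref{eq BDG} with $c_{2,8}=\big(H(\frac12-H)\big)^{1/2}$, which depends only on $H$.

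Finally I extend the estimate to general $f\in\Lambda_H$ by density, using Proposition \ref{prop 2.3}(i). If the right-hand side of \eqref{eq BDG} is infinite there is nothing to prove. Otherwise I approximate $f$ by elementary $f_n$ converging both in $\Lambda_H$ and in the norm $\big(\int_0^t\int_{\RR}[\cN_{\frac12-H,p}(\cdot)]^2\,dy\,ds\big)^{1/2}$. The stochastic integrals converge in $L^2(\Omega)$ by \eqref{Isometry}, and Fatou's lemma then passes the $L^p$ bound to the limit.

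The main obstacle is this last approximation step. One has to make sure that the elementary approximants can be chosen to converge in the $\cN_{\frac12-H,p}$-type norm, and not only in $\Lambda_H$. This requires mollifying in space and truncating in a way compatible with predictability. I would first attempt spatial convolution with a smooth kernel combined with a time discretization. The convolution does not increase $\cN_{\frac12-H,p}$ (again by Minkowski), and dominated convergence in $h$ then gives the convergence.
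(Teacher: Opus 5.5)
Your proposal is the standard argument behind \cite[Proposition 3.2]{HHLNT2017}, which the paper quotes without proof: BDG with the $\sqrt{4p}$ constant, then the difference form of the $\HH$-norm from \eqref{eq H product}, then Minkowski's inequality in $L^{p/2}(\Omega)$, giving $c_{2,8}=(H(\frac12-H))^{1/2}$. You can drop the density step you flag as the main obstacle: for any $f\in\Lambda_H$ the process $t\mapsto\int_0^t\int_{\RR}f\,dW$ is already a continuous $L^2$-martingale with bracket $\int_0^t\|f(s,\cdot)\|_{\HH}^2ds$ (pass to the limit from elementary processes using \eqref{Isometry} and Doob's inequality), so BDG and the Minkowski step apply to $f$ itself, and no approximation in the $\cN_{\frac12-H,\,p}$-norm is needed.
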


\section{The main results and applications}\label{sec main}

Our main result is the following estimation on the moments of the approximation error of the temporal increments. For any $\e>0$ and any random field $\{X(t,x)\}_{t\geq0,x\in\mathbb{R}}$, denote
$(\mathcal{D}_{\e}X)(t,x):=X(t+\e,x)-X(t,x)$ for $t\geq0$  and $x\in\mathbb{R}$. Let $\vartheta_0:=\frac{\alpha+2H-2}{2\alpha}\wedge\frac{\beta_0}{\alpha}$ and
 \begin{align*}
 \mathcal{I}:=\left(0, \,\frac{(\alpha-2H+2)\vartheta_0}{2\alpha(1+\vartheta_0)}\right)\cap\left(0, \,\frac{(\alpha-2H+2)(1-2H)}{2\alpha(2\alpha-2H+1)}\right).
 \end{align*}

  \begin{theorem}\label{thm main} Assume that Condition \ref{cond A} holds. Then for every $p\geq 1$ and $\eta\in \mathcal{I}$, there exists a finite constant  $c_{3,1}>0$
   such that  for all $\varepsilon\in(0,1)$,  $t\in [0,T]$ and $x\in \mathbb R$,
     \begin{align}\label{eq main}
    &\Big\| (\mathcal{D}_{\e}u)(t,x)-\big[ G_{\alpha}(t+\varepsilon,\cdot)-G_{\alpha}(t,\cdot)\big]*u_0(x)-\sigma(u(t, x))(\mathcal{D}_{\e}v)(t,x) \Big\|_{L^p(\Omega)}
    \le  c_{3,1}\,   \varepsilon^{\frac{\alpha+2H-2}{2\alpha}+\eta}.
     \end{align}

    In addition,     if the initial condition $u_0$ is H\"older continuous with order $ \beta_0>\frac{\alpha+2H-2}{\alpha}$, then there exists a finite constant  $c_{3,2}>0$
   such that  for all $\varepsilon\in(0,1)$,
     \begin{align}\label{eq main 2}
   \sup_{t\in[0,T]}\sup_{x\in \mathbb{R}} \Big\|(\mathcal{D}_{\e}u)(t,x)-\sigma(u(t, x))(\mathcal{D}_{\e}v)(t,x) \Big\|_{L^p(\Omega)}\le c_{3,2}\varepsilon^{\frac{\alpha+2H-2}{2\alpha}+\eta}.
     \end{align}
     \end{theorem}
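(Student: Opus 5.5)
We follow the strategy of Khoshnevisan et al.\ \cite{KSXZ2013} and \cite{QianWAngXiao2026}. We write the mild formulation
\[
u(t,x)=G_\alpha(t,\cdot)*u_0(x)+\int_0^t\int_{\RR}G_\alpha(t-s,x-y)\sigma(u(s,y))W(ds,dy),
\]
and similarly $v(t,x)=\int_0^t\int_{\RR}G_\alpha(t-s,x-y)W(ds,dy)$. After subtracting the initial-data term, the temporal increment splits as $(\mathcal D_\e u)(t,x)-[G_\alpha(t+\e,\cdot)-G_\alpha(t,\cdot)]*u_0(x)=J_1+J_2$. Here
\[
J_1=\int_t^{t+\e}\int_{\RR}G_\alpha(t+\e-s,x-y)\sigma(u(s,y))W(ds,dy),
\]
\[
J_2=\int_0^t\int_{\RR}\big[G_\alpha(t+\e-s,x-y)-G_\alpha(t-s,x-y)\big]\sigma(u(s,y))W(ds,dy).
\]
The increment $(\mathcal D_\e v)(t,x)$ has the same structure with $\sigma\equiv1$. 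Since $\sigma(u(t,x))$ is not adapted to the filtration over $[t,t+\e]$ or to the past, we cannot simply multiply inside the integrals. We therefore choose an intermediate time $t-\e^{\gamma}$ with $\gamma\in(0,1)$, to be optimized. We then compare each piece with the corresponding piece in which $\sigma(u(s,y))$ is frozen at $\sigma(u(t-\e^\gamma,x))$. The old part $\int_0^{t-\e^\gamma}$ is small because the kernel difference there is of size $\e/(t-s)$ by \eqref{eq grad temp}. Finally we replace $\sigma(u(t-\e^\gamma,x))$ by $\sigma(u(t,x))$, at a cost of $\e^{\gamma\vartheta_0}\cdot\|(\mathcal D_\e v)(t,x)\|_{L^p}\lesssim \e^{\gamma\vartheta_0+\frac{\alpha+2H-2}{2\alpha}}$, using \eqref{eq Holder}, Hölder's inequality and the Lipschitz property of $\sigma$. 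When $t<\e^\gamma$ we simply take the intermediate time to be $0$.

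The core estimate is on the frozen error over $[t-\e^\gamma,t+\e]$. The integrand there is $K_\e(s,y)\,[\sigma(u(s,y))-\sigma(u(t-\e^\gamma,x))]$, where $K_\e$ is either kernel. Because $\sigma(u(t-\e^\gamma,x))$ is $\mathcal F_{t-\e^\gamma}$-measurable, the frozen term is a genuine Walsh-type integral, so the BDG bound Proposition \ref{HBDG} applies. This requires bounding $\int\!\!\int[\mathcal N_{\frac12-H,p}(K_\e\,\Delta\sigma)(s,y)]^2dyds$. The difference in $y$ of a product is expanded in the standard way: one term carries the kernel difference times $\|\sigma(u(s,y+h))-\sigma(u(t-\e^\gamma,x))\|_{L^p}$, and the other carries $K_\e$ times $\|\sigma(u(s,y+h))-\sigma(u(s,y))\|_{L^p}$. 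In the first term, \eqref{eq Holder} gives a bound of order $(\e^{\gamma}+|x-y|^\alpha)^{\vartheta_0}$. Proposition \ref{prop green 1} is exactly what handles the resulting weighted Sobolev integral $\int\!\!\int|G(1,y)-G(1,z)|^2|y-z|^{2H-2}|y|^{\alpha\theta}$ after scaling by \eqref{eq scaling}. In the second term we use the uniform bound on $\mathcal N_{1/2-H,p}u$ from \eqref{eq bound}; since $\sigma'$ is Lipschitz, we also use $\sigma(a)-\sigma(b)-\sigma(c)+\sigma(d)$ type bounds. This produces a gain of order $\e^{\gamma\theta}$ with $\theta<\frac{1-2H}{\alpha}$, relative to the principal order $\e^{\frac{\alpha+2H-2}{2\alpha}}$.

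On the long-past range $[0,t-\e^\gamma]$, \eqref{eq grad temp} and \eqref{eq grad spatial} bound the kernel difference. The $\mathcal N$-norm squared of this part, using \eqref{eq bound}, is of order
\[
\int_{\e^\gamma}^{T}\frac{\e^2}{r^2}\,r^{-\frac{2-2H}{\alpha}}\,dr\asymp \e^{2-\gamma(1+\frac{2-2H}{\alpha})}.
\]
We then balance the exponents: the past error, the frozen error $\e^{\frac{\alpha+2H-2}{2\alpha}}\cdot\e^{\gamma\min(\vartheta_0,\theta)}$-type terms, and the replacement error. Optimizing over $\gamma$ should give precisely the range $\mathcal I$ for $\eta$. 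The two constraints in $\mathcal I$ correspond to $\vartheta_0$ (Hölder regularity) and $\theta\uparrow\frac{1-2H}{\alpha}$ (the Proposition \ref{prop green 1} weight), each divided by the $1+\cdot$ factor that comes from the trade-off in $\gamma$. The case $p<2$ follows from $p\ge p_0$ by monotonicity of $L^p$ norms.

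For \eqref{eq main 2}, it remains to bound the initial-data term. We use $|G(t+\e)-G(t)|\le \e\,|\partial_tG|$ together with $\int\partial_tG(r,z)dz=0$, which gives
\[
\big|[G_\alpha(t+\e,\cdot)-G_\alpha(t,\cdot)]*u_0(x)\big|\lesssim \e^{\beta_0/\alpha}\wedge(\e\,t^{\beta_0/\alpha-1}).
\]
A more careful version, writing the difference as $\int_t^{t+\e}\partial_r G*(u_0(x-\cdot)-u_0(x))\,dr$, yields the uniform bound $\e^{\beta_0/\alpha}\le \e^{\frac{\alpha+2H-2}{2\alpha}+\eta}$ when $\beta_0>\frac{\alpha+2H-2}{\alpha}$ and $\eta$ is small. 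The main obstacle is the first product-rule term in the frozen error. There the $\mathcal N$-seminorm of the kernel difference must be paired with the growing weight $|x-y|^{\alpha\vartheta_0}$. Keeping track of the exponents while staying within $\theta<\frac{1-2H}{\alpha}$ is delicate, and it is where the nonlocal heavy tails of $G_\alpha$ from \eqref{eq Green 3} could break integrability.
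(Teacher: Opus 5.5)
Your outline follows the paper's argument. Your split at $t-\e^{\gamma}$ is the paper's $t(\e)=t-\e^{\theta}$. Your old-range bound $\e^{2-\gamma\frac{\alpha+2-2H}{\alpha}}$ (squared norm) is the main term of Lemma~\ref{lem J11}. Your freezing and replacement steps are Lemmas~\ref{lem J12} and \ref{lem J14}. Three minor remarks:
\begin{itemize}
\item The old-range bound should be done by Plancherel, as in the paper. The estimates \eqref{eq grad temp} and \eqref{eq grad spatial} by themselves do not give the full factor $(\e/r)^2$ for the spatial increments of $G_\alpha(r+\e,\cdot)-G_\alpha(r,\cdot)$.
\item On $[t,t+\e]$ you may freeze directly at $\sigma(u(t,x))$, since it is $\mathcal F_t$-measurable; this is Lemma~\ref{lem J2}. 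Your remark that it is not adapted there is wrong, though your detour through $t-\e^{\gamma}$ is harmless.
\item Your replacement cost $\e^{\gamma\vartheta_0}\|(\mathcal D_\e v)(t,x)\|_{L^{2p}(\Omega)}$ is sharper than the paper's crude bound in Lemma~\ref{lem J14}, although that term is not the binding one.
\end{itemize}

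The one substantive problem is the exponent of the frozen error. This exponent is exactly what produces $\mathcal I$, and you assert it rather than compute it.

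Proposition~\ref{prop green 1} concerns $G_\alpha(1,\cdot)$ only. To use it on the spatial-H\"older part (weight $|x-y|^{2\alpha\vartheta}$ with $2\vartheta<\frac{1-2H}{\alpha}$ and $\vartheta\le\vartheta_0$), the paper bounds the spatial increments of $G_\alpha([t-s,t+\e-s],\cdot)$ by those of $G_\alpha(t+\e-s,\cdot)$ and $G_\alpha(t-s,\cdot)$ separately, then rescales. This discards the $\e$-difference. In squared norm it gives $\e^{\gamma(\frac{\alpha+2H-2}{\alpha}+2\vartheta)}$, not the principal order times $\e^{2\gamma\vartheta}$. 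That second form is correct only for the time-H\"older part.

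Balancing $\e^{\gamma(\frac{\alpha+2H-2}{\alpha}+2\vartheta)}$ against the old-range bound $\e^{\frac{\alpha+2H-2}{\alpha}+\frac{\alpha-2H+2}{\alpha}(1-\gamma)}$ forces $\gamma=\frac{1}{1+\vartheta}$. It then gives $\eta<\frac{(\alpha-2H+2)\vartheta}{2\alpha(1+\vartheta)}$, and letting $\vartheta\uparrow\vartheta_0\wedge\frac{1-2H}{2\alpha}$ yields exactly $\mathcal I$. With the frozen-error exponents as you wrote them ($\e^{\frac{\alpha+2H-2}{2\alpha}}\cdot\e^{\gamma\min(\vartheta_0,\theta)}$), the optimization over $\gamma$ would produce a different range. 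Proposition~\ref{prop green 1} does not justify that range.

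Finally, your caveat that $\eta$ must be small in \eqref{eq main 2} is genuinely needed. At $t=0$ all stochastic terms have mean zero. So the $L^p(\Omega)$-norm of the error is at least $|G_\alpha(\e,\cdot)*u_0(x)-u_0(x)|$. For admissible $u_0$ this can be of order $\e^{\beta/\alpha}$ with $\beta>\beta_0$ arbitrarily close to $\beta_0$. For instance, with $\alpha=1.9$, $H=0.45$, $\beta_0=0.425$, it exceeds $\e^{\frac{\alpha+2H-2}{2\alpha}+\eta}$ for $\eta$ in the upper part of $\mathcal I$. The paper's bound \eqref{eq J0_bound1} has the same limitation.
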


By applying  Theorem \ref{thm main}  and  the similar arguments in \cite{FKM2015,  WX2024},
we  obtain the following results.

   \begin{corollary}\label{cor  t LIL}  Assume  Condition \ref{cond A} holds.
    Choose and fix $t>0$ and $x\in \mathbb R$.  Then with probability one,
    \begin{itemize}
    \item[(a).] (Khinchin's law of the iterated logarithm)
    \begin{equation}\label{Eq:LIL}
    \begin{split}
    \limsup_{\e \to 0}\frac{|u(t+\varepsilon, x)-u(t, x)|}{\e^{\widetilde{H}/2}\sqrt{2 \log\log(1/\e)}}= \kappa |\sigma(u(t, x))|,
    \end{split}
    \end{equation}
    where $\widetilde{H}=\frac{\alpha+2H-2}{\alpha}$.
        \item[(b).]  (Chung's law  of the iterated logarithm)
    \begin{equation}\label{Eq: CLIL}
    \begin{split}
    \liminf_{\e \to 0}\sup_{0\le r\le \e}\frac{|u(t+r, x)-u(t, x)|}{(\e/  \log\log(1/\e))^{\widetilde{H}/2}}=  \kappa\lambda_{\widetilde{H}}^{\widetilde{H}/2} |\sigma(u(t, x))|,
    \end{split}
    \end{equation}
    where $\lambda_{\widetilde{H}}$ is the small ball  constant of a fBm with index $\widetilde{H}/2$ (see, e.g., \cite[Theorem 6.9]{LS01*}).
  \end{itemize}
  If   $ \beta_0>\frac{\alpha+2H-2}{\alpha}$ in Condition \ref{cond A}, then both results also hold at $t=0$ with $\kappa$ in \eqref{eq constant 1} replaced by   \begin{align}\label{eq con kappa1}
             \widetilde\kappa= \left( \frac{ \Gamma(2\widetilde{H}) }{2^{1-\widetilde{H}} \Gamma(\widetilde{H})}\right)^{1/2}.
             \end{align}
         \end{corollary}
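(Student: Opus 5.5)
We will deduce Corollary \ref{cor  t LIL} from Theorem \ref{thm main} combined with the decomposition \eqref{eq decom} of the linear solution. Fix $t>0$ and $x$. The increment splits as
\[
u(t+\e,x)-u(t,x)=\sigma(u(t,x))(\mathcal D_\e v)(t,x)+\big[G_\alpha(t+\e,\cdot)-G_\alpha(t,\cdot)\big]*u_0(x)+R_\e ,
\]
where, by \eqref{eq main}, $\|R_\e\|_{L^p(\Omega)}\le c_{3,1}\e^{\widetilde H/2+\eta}$ for every $p$. The deterministic term is harmless for $t>0$: by \eqref{eq grad temp} it is bounded by $c\,\e t^{-1}\|u_0\|_\infty$, which is $o(\e^{\widetilde H/2})$ since $\widetilde H/2<1/2$. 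By \eqref{eq decom}, $v(t+\e,x)-v(t,x)=\kappa(B^{\widetilde H/2}_{t+\e}-B^{\widetilde H/2}_t)-[S(t+\e)-S(t)]$ with $S$ smooth on $(0,\infty)$, so the last bracket is $O(\e)$ a.s. Hence it suffices to show that $R_\e$ is a.s. negligible and then to invoke the classical Khinchin and Chung laws for fBm with index $\widetilde H/2$ (the latter with small ball constant $\lambda_{\widetilde H}$ from \cite{LS01*}). The random factor $\sigma(u(t,x))$ is $\mathcal F_t$-measurable, while the fBm increments after time $t$ are not independent of $\mathcal F_t$. However, the LIL statements for $B$ are almost sure statements about a single path, so multiplying by a fixed random constant causes no difficulty.

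The main step is to upgrade the moment bound on $R_\e$ to an almost sure bound that is uniform in small $r$, which is needed for the Chung law's supremum. We view $r\mapsto R_r:=(\mathcal D_r u)(t,x)-[\cdots]*u_0(x)-\sigma(u(t,x))(\mathcal D_r v)(t,x)$ as a process on $[0,1]$. Its increments satisfy
\[
R_{r}-R_{s}=(\mathcal D_{r-s}u)(t+s,x)-[\cdots]-\sigma(u(t,x))(\mathcal D_{r-s}v)(t+s,x).
\]
We bound $\|R_r-R_s\|_p$ by a power $|r-s|^{\gamma}$, with $\gamma$ near $\widetilde H/2$, using \eqref{eq Holder} and the analogous Gaussian bound for $v$. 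Kolmogorov's continuity theorem on dyadic intervals $[0,2^{-n}]$ then gives $\sup_{r\le 2^{-n}}|R_r|\lesssim 2^{-n(\widetilde H/2+\eta')}$ for some $\eta'>0$. The argument is to interpolate the two bounds: the small bound $\e^{\widetilde H/2+\eta}$ holds at the grid points $k2^{-m}$, while the Hölder modulus controls the points between them. A union bound over roughly $2^{m-n}$ grid points, with $p$ large and the fine scale $m$ slightly larger than $n$, is absorbed by Borel--Cantelli.

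We expect this to be the main obstacle, because the increment Hölder exponent of $u$ is only $\vartheta_0\le\widetilde H/2$ and no better. The grid must therefore be chosen so that $2^{-m\vartheta_0}\ll 2^{-n\widetilde H/2}$, while the cost $2^{m-n}\cdot 2^{-np(\widetilde H/2+\eta)}$ is still beaten by the bound at the grid points. This trade-off is presumably why $\eta\in\mathcal I$ involves $\vartheta_0/(1+\vartheta_0)$. With this in place, (a) and (b) follow by a squeeze in both directions.

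For $t=0$ under $\beta_0>\widetilde H$, we use \eqref{eq main 2} at $t=0$. In that case $v(\e,x)$ is itself Gaussian with variance $\widetilde\kappa^2\e^{\widetilde H}$, obtained by computing $\int_0^\e\|G_\alpha(s,\cdot)\|_{\mathcal H}^2ds$ via scaling \eqref{eq scaling}. The process $r\mapsto v(r,x)$ is then a self-similar process (a bifractional-type process) whose LILs near the origin follow from \cite{FKM2015,WX2024}, which gives the constant $\widetilde\kappa$.
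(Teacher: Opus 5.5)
Your argument for $t>0$ is the paper's argument. It combines Theorem~\ref{thm main}, an almost-sure remainder bound uniform over $0<\e<\delta$ (the paper's Lemma~\ref{lem 51}, imported from \cite{QianWAngXiao2026} rather than proved), the decomposition \eqref{eq decom}, and the fBm laws applied pathwise. Your Kolmogorov/dyadic-grid/Borel--Cantelli sketch is a valid proof of that lemma.

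One side remark of yours is off. The grid scale $m$ is a fixed multiple of $n$, roughly $n\widetilde H/(2\vartheta_0)$. The union-bound factor $2^{m-n}$ is then beaten by $2^{-np(\eta-\eta')}$ once $p$ is large, and Theorem~\ref{thm main} holds for every $p$. So this step imposes no condition on $\eta$. The factor $\vartheta_0/(1+\vartheta_0)$ in $\mathcal I$ comes from the choice $\theta=1/(1+\vartheta)$ in the proof of Theorem~\ref{thm main}. Your $O(\e/t)$ bound on the initial-data term via \eqref{eq grad temp} is fine, and sharper than \eqref{eq J01}.

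The genuine gap is Chung's law at $t=0$. A variance identity for $v(\e,x)$ determines the Khinchin constant at the self-similar origin, namely $(\mathrm{Var}\,v(1,x))^{1/2}$. It does not determine Chung's constant, which is set by the small-ball rate of $\sup_{0\le r\le 1}|v(r,x)|$. That rate is governed by the local fBm part.
\begin{itemize}
\item Write $\kappa B=v+S$, where $S$ is the contribution of an independent copy of the noise on negative times (the standard construction behind \eqref{eq decom}). Then $S$ is independent of $v$, self-similar, and smooth on $(0,\infty)$, so its small-ball cost is of lower order than $\delta^{-2/\widetilde H}$.
\item Anderson's inequality gives $\mathbb P(\sup_{r\le1}|\kappa B_r|\le\delta)\le\mathbb P(\sup_{r\le1}|v(r,x)|\le\delta)$.
\item Independence gives $\mathbb P(\sup_{r\le1}|\kappa B_r|\le(1+\epsilon)\delta)\ge\mathbb P(\sup_{r\le1}|v(r,x)|\le\delta)\,\mathbb P(\sup_{r\le1}|S_r|\le\epsilon\delta)$.
\item Together these yield $-\log\mathbb P(\sup_{r\le1}|v(r,x)|\le\delta)\sim\lambda_{\widetilde H}\kappa^{2/\widetilde H}\delta^{-2/\widetilde H}$.
\end{itemize}
Hence the $t=0$ Chung constant is $\kappa\lambda_{\widetilde H}^{\widetilde H/2}|\sigma(u_0(x))|$, not $\widetilde\kappa\lambda_{\widetilde H}^{\widetilde H/2}|\sigma(u_0(x))|$. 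This is exactly what the paper's Lemma~\ref{corotLIL0}(b) states, and the paper's proof invokes that lemma at the origin. So the paper's own argument delivers $\kappa$ there. The claim ``$\widetilde\kappa$ in both results'' cannot be obtained by your route or by the paper's. Your $t=0$ Khinchin argument is fine.

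Separately, ``computing $\int_0^\e\|G_\alpha(s,\cdot)\|_{\HH}^2ds$ via scaling'' yields only the power $\e^{\widetilde H}$, not the constant. Doing the Fourier integral gives $\mathrm{Var}\,v(\e,x)=2^{\widetilde H-1}\kappa_*^2\e^{\widetilde H}$ with $\kappa_*^2=\frac{2c_{1,1}\Gamma(1-\widetilde H)}{\alpha\widetilde H}$. The same $\kappa_*^2$ is the leading constant of $\mathrm{Var}(v(t+\e,x)-v(t,x))$ for $t>0$. So the ratio $2^{1-\widetilde H}$ between $\kappa^2$ and $\widetilde\kappa^2$ is right. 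However, $\kappa_*^2$ equals $\Gamma(2\widetilde H)/\Gamma(\widetilde H)$ only when $\alpha=2$, where $\widetilde H=H$ and the reflection formula applies. For $\alpha\in(1,2)$ they differ, so the values in \eqref{eq constant 1} and \eqref{eq con kappa1} should be verified rather than quoted.
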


\section{Proofs of main results}
This section is devoted to the proof of the temporal increment approximation result.
The argument is based on the approximation techniques developed in
\cite{KSXZ2013,WZ2021*,QianWAngXiao2026}, with suitable modifications to handle the fractional diffusion operator and the rough spatial covariance structure. Throughout this section, we work under Condition \ref{cond A}.

\subsection{An approximation theorem for the stochastic integral}
    For fixed $x\in\mathbb R$, the temporal increment of the mild solution can be written as $u(t+\varepsilon, x)-u(t, x)  =\, \mathcal{J}_0+ \mathcal J_1+\mathcal J_2$,
  where
  \begin{align}
  \mathcal{J}_0:=&\,\int_{\mathbb{R}}\left(G_{\alpha}(t+\varepsilon,x-y)-G_{\alpha}(t,x-y)\right)u_0(y)dy, \label{eq J0}\\
  \mathcal J_1:=&\, \int_0^t \int_{\mathbb R} \, \left[G_{\alpha}(t+\varepsilon-s,x-y)- G_{\alpha}(t-s,x-y)\right] \sigma(u(s, y))W(ds,dy),
  \label{eq J1}\\
  \mathcal J_2:=&\, \int_t^{t+\varepsilon} \int_{\mathbb R} \,G_{\alpha}(t+\varepsilon-s,x-y) \sigma(u(s, y))W(ds,dy).\label{eq J2}
  \end{align}

 We first consider the approximation of $\mathcal J_{2}$. Define
  \begin{align}
  \widetilde{\mathcal J_1}&:=\, \sigma(u(t, x))\int_0^t \int_{\mathbb R} \, \left[G_{\alpha}(t+\varepsilon-s,x-y)- G_{\alpha}(t-s,x-y)\right] W(ds,dy),\label{eq tilde J1}\\
  \widetilde{\mathcal J_{2}}&:=\,\sigma(u(t, x))\int_t^{t+\varepsilon} \int_{\mathbb R} \,G_{\alpha}(t+\varepsilon-s,x-y)  W(ds,dy).\label{eq tilde J2}
  \end{align}
  \begin{lemma}\label{lem J2}
  For any  $T>0$,  $p>p_0$,  $\vartheta\le \vartheta_0$ and $\vartheta<\frac{1-2H}{\alpha}$,  there exists a finite positive constant  $c_{3,3}$, independent of  $\varepsilon\in (0,1)$, such that
     \begin{align*}\label{eq J2 diff1}
  \sup_{t\in [0,T]} \sup_{x\in \mathbb R}  \left\|\mathcal J_2-\widetilde{\mathcal J_2} \right\|_{L^p(\Omega)} ^2 \le c_{3,3} \varepsilon^{ \frac{\alpha +2H-2}{\alpha} + \vartheta }.
  \end{align*}
  \end{lemma}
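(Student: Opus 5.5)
The difference is a single stochastic integral,
\[
\mathcal J_2-\widetilde{\mathcal J_2}=\int_t^{t+\varepsilon}\int_{\mathbb R} G_{\alpha}(t+\varepsilon-s,x-y)\big[\sigma(u(s,y))-\sigma(u(t,x))\big]W(ds,dy),
\]
whose integrand is predictable on $[t,t+\varepsilon]$, because $\sigma(u(t,x))$ is $\mathcal F_t$-measurable. I will apply the BDG-type inequality of Proposition \ref{HBDG} with $f(s,y)=G_\alpha(t+\varepsilon-s,x-y)\Delta(s,y)$, where $\Delta(s,y):=\sigma(u(s,y))-\sigma(u(t,x))$.

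\textbf{Splitting the seminorm.} For the increment in $y$,
\[
f(s,y+h)-f(s,y)=\big[G_\alpha(r,x-y-h)-G_\alpha(r,x-y)\big]\Delta(s,y+h)+G_\alpha(r,x-y)\big[\Delta(s,y+h)-\Delta(s,y)\big],
\]
with $r=t+\varepsilon-s\in(0,\varepsilon]$. Hence $[\mathcal N_{\frac12-H,p}f]^2\le 2(A_1+A_2)$.

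\textbf{Term $A_2$.} Here $\Delta(s,y+h)-\Delta(s,y)=\sigma(u(s,y+h))-\sigma(u(s,y))$, so the Lipschitz property of $\sigma$ (which follows from (A2) on bounded sets, with moments controlled by \eqref{eq bound} and H\"older's inequality) gives
\[
A_2\le C\,G_\alpha^2(r,x-y)\,[\mathcal N_{\frac12-H,2p}u(s,y)]^2.
\]
Integrating in $y$ uses $\int G_\alpha^2(r,z)\,dz=c\,r^{-1/\alpha}$, and then $\int_0^\varepsilon r^{-1/\alpha}\,dr=c\,\varepsilon^{1-1/\alpha}$. Since $1-\frac1\alpha\ge \frac{\alpha+2H-2}{\alpha}+\frac{1-2H}{\alpha}$ and $\vartheta<\frac{1-2H}{\alpha}$, this term is dominated by the target, with $\varepsilon<1$.

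\textbf{Term $A_1$.} By \eqref{eq Holder}, $\|\Delta(s,y)\|_{L^{2p}}\le C\big(|s-t|^{\vartheta_0}+|x-y|^{\alpha\vartheta_0}\big)\wedge C$, and for $\vartheta\le\vartheta_0$ we can bound this by $C\big(r'^{\vartheta}+|x-y-h|^{\alpha\vartheta}\big)$, where $r'=s-t\le\varepsilon$. The contribution is then
\[
\int_0^\varepsilon dr\int\!\!\int \big|G_\alpha(r,z+h)-G_\alpha(r,z)\big|^2|h|^{2H-2}\big(\varepsilon^{2\vartheta}+|z+h|^{2\alpha\vartheta}\big)\,dz\,dh .
\]
Scaling \eqref{eq scaling} with $z=r^{1/\alpha}z'$, $h=r^{1/\alpha}h'$ turns the first part into $r^{(2H-2-1)/\alpha}\cdot r^{2/\alpha}\cdot r^{-2/\alpha}\cdots$. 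Collecting exponents gives $r^{-(3-2H)/\alpha+1/\alpha\cdot 2}\!=r^{(2H-1)/\alpha}$ times the finite constant $\int\!\!\int|G(1,\cdot)|^2$-type integral ($\theta=0$ case of Proposition \ref{prop green 1}). Integrating $r^{(2H-1)/\alpha}$ over $[0,\varepsilon]$ gives $\varepsilon^{(\alpha+2H-1)/\alpha}$, and multiplying by $\varepsilon^{2\vartheta}$ is more than enough. The second part produces an extra factor $r^{2\vartheta}$ times the weighted integral of Proposition \ref{prop green 1} with $\alpha\theta=2\alpha\vartheta$. Here one has to be careful: the proposition requires $\alpha\theta$ in a range, which I will match with the hypothesis $\vartheta<\frac{1-2H}{\alpha}$, possibly after using $|z+h|^{2\alpha\vartheta}\le (\cdots)^{\alpha\vartheta}$ and Cauchy–Schwarz or the trivial bound $\|\Delta\|\le C$ to reduce the power. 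This yields $\int_0^\varepsilon r^{(2H-1)/\alpha+\vartheta}\,dr\lesssim \varepsilon^{\frac{\alpha+2H-2}{\alpha}+\frac1\alpha+\vartheta}$, which is at most $\varepsilon^{\frac{\alpha+2H-2}{\alpha}+\vartheta}$.

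\textbf{Main obstacle.} The hard step is $A_1$: matching the moment bound of $\Delta$ (a time part and a space part, truncated by boundedness) to the admissible weight exponent in Proposition \ref{prop green 1}. To do this I will interpolate $\min(1,a)\le a^{\vartheta'}$ so that the spatial weight becomes $|y|^{\alpha\theta}$ with $\theta<\frac{1-2H}{\alpha}$. The final exponent bookkeeping is then routine, and the estimates are uniform in $t\in[0,T]$ and $x\in\mathbb R$ because every bound above is.
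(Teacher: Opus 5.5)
Your proposal follows the paper's proof. The shared steps are:
\begin{itemize}
\item BDG via Proposition \ref{HBDG}, with the same split into a kernel-increment term and a $G_\alpha^2$ term.
\item For the $G_\alpha^2$ term, the bound $\int_0^\varepsilon r^{-1/\alpha}\,dr=c\,\varepsilon^{(\alpha-1)/\alpha}$.
\item For the kernel-increment term, the interpolated moment bound $\|\sigma(u(s,y))-\sigma(u(t,x))\|_{L^p(\Omega)}^2\le C\min\{1,(s-t)^{2\vartheta_0}+|y-x|^{2\alpha\vartheta_0}\}\le C[(s-t)^{\vartheta}+|y-x|^{\alpha\vartheta}]$.
\end{itemize}
That last bound is exactly the paper's. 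It is what makes the weight admissible in Proposition \ref{prop green 1} under $\vartheta<\frac{1-2H}{\alpha}$. Squaring your unsquared H\"older bound would instead give the weight $|y-x|^{2\alpha\vartheta}$, which is only admissible for $\vartheta<\frac{1-2H}{2\alpha}$. So your interpolation fix is the right one.

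\textbf{One computation must be corrected.} Under $z=r^{1/\alpha}z'$, $h=r^{1/\alpha}h'$ the factors are:
\begin{itemize}
\item $r^{-2/\alpha}$ from $|G_\alpha|^2$;
\item $r^{2/\alpha}$ from $dz\,dh$;
\item $r^{(2H-2)/\alpha}$ from $|h|^{2H-2}$.
\end{itemize}
Hence $\int\!\!\int|G_\alpha(r,z+h)-G_\alpha(r,z)|^2|h|^{2H-2}\,dz\,dh=c\,r^{(2H-2)/\alpha}$, not $c\,r^{(2H-1)/\alpha}$. This agrees with Lemma \ref{lem int p} for $\beta=\frac12-H$, and with Plancherel, which gives $c\int e^{-2r|\xi|^\alpha}|\xi|^{1-2H}\,d\xi$.

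This has two consequences:
\begin{itemize}
\item The power is integrable at $r=0$ only because $\alpha+2H-2>0$, which follows from $H>\frac{3-\alpha}{4}$.
\item There is no spare factor $\varepsilon^{1/\alpha}$. The spatially weighted part gives exactly $c\,\varepsilon^{\frac{\alpha+2H-2}{\alpha}+\vartheta}$, with no room to spare. The time part is acceptable only because of the H\"older factor in $s-t$; without it you would get only $\varepsilon^{\frac{\alpha+2H-2}{\alpha}}$. So that factor is essential, not ``more than enough.''
\end{itemize}

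With this correction your argument closes and coincides with the paper's proof.
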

 \begin{proof} Set $g_{\varepsilon, t,  x}(s,y):=  G_{\alpha}(t+\varepsilon-s, x-y) \left[\sigma\big(u(s, y)\big)- \sigma\big(u(t, x)\big)\right]$. Then  $\mathcal J_2-\widetilde{\mathcal J_2}=\int_t^{t+\varepsilon} \int_{\mathbb R} g_{\varepsilon, t, x}(s,y) W(ds,dy)$.  Applying   the inequality  \eqref{eq BDG},  we have
 \begin{equation*}
\begin{split}
 \left\|\mathcal J_2-\widetilde{\mathcal J_2} \right\|_{L^p(\Omega)}^2
\le  \,  4p c_{2,8}^2 \int_t^{t+\varepsilon}ds \int_{\mathbb R}dy  \int_{\mathbb R}dz   \left\|   g_{\varepsilon, t,  x}(s,y) -  g_{\varepsilon, t,  x}(s,z)  \right\|^2_{L^p(\Omega)}  |y-z|^{2H-2}.
 \end{split}
\end{equation*}
 By separating the increment of the heat kernel from that of the nonlinear coefficient, we have
\begin{equation} \label{eq A 1}
  \left\|\mathcal J_2-\widetilde{\mathcal J_2} \right\|_{L^p(\Omega)}^2 \le   8p c_{2,8}^2 \left[  \int_t^{t+\varepsilon}  I_{1}(s)ds + \int_t^{t+\varepsilon}  I_{2}(s)ds\right],
\end{equation}
 where
\begin{align*} 
 I_{1}(s):=&\, \int_{\mathbb R}dy  \int_{\mathbb R}dz   \big|G_{\alpha}(t+\varepsilon-s,  x-y)- G_{\alpha}(t+\varepsilon-s,  x-z)  \Big|^2|y-z|^{2H-2} \left\|\sigma\big(u(s, y)\big)- \sigma\big(u(t, x)\big)\right\|_{L^p(\Omega)}^2,\\
 I_{2}(s):= &\, \int_{\mathbb R}dy  \int_{\mathbb R}dz G_{\alpha}(t+\varepsilon-s, x-z)^2
   |y-z|^{2H-2} \left\|\sigma\big(u(s, y)\big)- \sigma\big(u(s, z)\big)\right\|_{L^p(\Omega)} ^2.
 \end{align*}

For $I_{1}(s)$, by using \eqref{eq bound}, \eqref{eq Holder} and Remark \ref{remark1}, there exists a positive constant $c_{3,4}$ such that for any $\vartheta\le \vartheta_0$,
 \begin{align} \label{eq A0 112}
 I_{1}(s) \le &\, c_{3,4}
   \int_{\mathbb R}dy  \int_{\mathbb R}dz   \big|G_{\alpha}(t+\varepsilon-s,  x-y)- G_{\alpha}(t+\varepsilon-s,  x-z)  \Big|^2 |y-z|^{2H-2} \left[\big(s-t\big)^{\vartheta}+ |y-x|^{\alpha\vartheta}\right]\notag\\
= &\,
c_{3,4}   \big(s-t\big)^{\vartheta}
  \int_{\mathbb R}dy  \int_{\mathbb R}dz   \big|G_{\alpha}(t+\varepsilon-s,  x-y)- G_{\alpha}(t+\varepsilon-s,  x-z)  \Big|^2     |y-z|^{2H-2}   \notag\\
&\,+ c_{3,4}   \int_{\mathbb R}dy  \int_{\mathbb R}dz   \big|G_{\alpha}(t+\varepsilon-s,  x-y)- G_{\alpha}(t+\varepsilon-s,  x-z)  \Big|^2  |y-z|^{2H-2}  |y-x|^{\alpha\vartheta}\notag\\
=:&\, c_{3,4}\left[I_{1, 1}(s)+ I_{1, 2}(s)\right].
 \end{align}
By  Lemma \ref{lem int p} ($\beta=\frac12-H$), there exists a positive constant $c_{3,5}$ such that
 \begin{align} \label{eq A 11-10}
  \int_t^{t+\varepsilon}   I_{1, 1}(s)ds \le   c_{3,5}   \int_t^{t+\varepsilon} (s-t)^{\vartheta} (t+\varepsilon-s)^{\frac{2H-2}{\alpha}} ds
 \le   c_{3,5} \mathcal B\left(\vartheta+1, \frac{\alpha +2H-2}{\alpha}\right)  \varepsilon^{\vartheta+\frac{\alpha +2H-2}{\alpha}}.
  \end{align}
 Here, $\mathcal B(a,b)$ is the Beta function. By a  change of variables,  \eqref{eq scaling} and \eqref{eq green ineq1},  there exists a positive constant c such that for any $\vartheta<\frac{1-2H}{\alpha}$,
 \begin{align*} 
&  \int_{\mathbb R}dy  \int_{\mathbb R}dz   \big|G_{\alpha}(t+\varepsilon-s,  x-y)- G_{\alpha}(t+\varepsilon-s,  x-z)  \big|^2  |y-z|^{2H-2}  |y-x|^{\alpha\vartheta}\\
=&\,  (t+\varepsilon-s)^{\vartheta+\frac{2H-2}{\alpha}}\int_{\mathbb R}dy  \int_{\mathbb R}dz   \big|G_{\alpha}(1,  y)- G_{\alpha}(1,  z)  \Big|^2  |y-z|^{2H-2}  |y|^{\alpha\vartheta}
\le c_{3,6}(t+\varepsilon-s)^{\vartheta+\frac{2H-2}{\alpha}}.
\end{align*}
Thus,
  \begin{equation} \label{eq A  11-2}
\int_t^{t+\varepsilon}   I_{1,2}(s)ds \le\,c_{3,6} \int_t^{t+\varepsilon} (t+\varepsilon-s)^{\vartheta+\frac{2H-2}{\alpha}}   ds = \frac{c_{3,6}}{\vartheta+\frac{\alpha+2H-2}{\alpha}} \varepsilon^{\vartheta+\frac{\alpha+2H-2}{\alpha}}.
 \end{equation}

For $I_{2}(s)$, since the derivative of $\sigma$ is Lipschitz, by \eqref{eq bound}, we have
$$
\sup_{s\in [0,T], x\in \mathbb R} \left[\mathcal N_{\frac{1}{2}-H, p} \sigma(u(s,x))\right]^2\le\, L_{\sigma}^2\sup_{s\in [0,T], x\in \mathbb R} \left[\mathcal N_{\frac{1}{2}-H, p} u(s,x)\right]^2<\infty,
$$
where $L_{\sigma}$ is the Lipschitz constant of $\sigma$. It follows  that
\begin{align}\label{eq A 12-2}
\int_t^{t+\varepsilon}  I_{2}(s)ds
  =c_{3,7}  \int_t^{t+\varepsilon} (t+\varepsilon-s)^{-\frac1{\alpha}}ds \le c_{3,7}\frac{\alpha}{\alpha-1} \varepsilon^{\frac{\alpha-1}{\alpha}}.
\end{align}

The desired result follows from \eqref{eq A 1}, \eqref{eq A0 112}, \eqref{eq A 11-10}, \eqref{eq A 11-2} and \eqref{eq A 12-2}. This completes the proof. \end{proof}

We next consider the approximation of $\mathcal J_1$.
  Let $\theta\in (0,1)$, and denote  $\mathcal J_1:=\mathcal J_{1, \theta}+\mathcal J_{1, \theta}'$ and $\widetilde{\mathcal J}_1:=\widetilde{\mathcal J}_{1, \theta}+\widetilde{\mathcal J}_{1, \theta}'$,
  where
  \begin{equation}\label{LAbJ1theta}
  \begin{split}
   \mathcal J_{1, \theta}:=&\, \int_0^{t(\varepsilon)}\int_{\mathbb R}\left[G_{\alpha}(t+\varepsilon-s,x-y)-G_{\alpha}(t-s,x-y)\right]\sigma(u(s, y))W(ds,dy),\\
      \mathcal J_{1, \theta}':=&\, \int_{t(\varepsilon)}^t\int_{\mathbb R}\left[G_{\alpha}(t+\varepsilon-s,x-y)-G_{\alpha}(t-s,x-y)\right]\sigma(u(s, y))W(ds,dy),\\
       \widetilde{\mathcal J}_{1, \theta}:=&\,\sigma(u(t,x)) \int_0^{t(\varepsilon)}\int_{\mathbb R}\left[G_{\alpha}(t+\varepsilon-s,x-y)-G_{\alpha}(t-s,x-y)\right]W(ds,dy),\\
       \widetilde{\mathcal J}_{1, \theta}':=&\, \sigma(u(t, x))\int_{t(\varepsilon)}^t\int_{\mathbb R}\left[G_{\alpha}(t+\varepsilon-s,x-y)-G_{\alpha}(t-s,x-y)\right]W(ds,dy).
  \end{split}
  \end{equation}
  Here, $t(\varepsilon):=t-\varepsilon^{\theta}$ for any $t>0$ and $\varepsilon$ small enough such that $t(\e)>0$.

  We adopt a modified version of the approximation argument developed in
\cite{KSXZ2013, WX2024} to estimate $\mathcal J_1$.
\begin{itemize}
\item
First, we show that the contribution of $\mathcal J_{1,\theta}$ is of higher order for a suitable choice of $\theta\in(0,1)$.

\item
Second, since the kernels
$G_{\alpha}(t+\varepsilon-s,x-y)$ and
$G_{\alpha}(t-s,x-y)$ become increasingly concentrated as $s\uparrow t$
for $s\in(t(\varepsilon),t)$, we show that
$\mathcal J_{1,\theta}'$ can be approximated by $\mathcal J_{1,\theta}''$, where
\begin{align}\label{eq appr 2}
\mathcal J_{1,\theta}''
:=&\,\sigma\left(u(t(\varepsilon),x)\right)
\int_{t(\varepsilon)}^t\int_{\mathbb R}
\left[
G_{\alpha}(t+\varepsilon-s,x-y)-G_{\alpha}(t-s,x-y)
\right]
W(ds,dy).
\end{align}

\item
Finally, using the H\"older continuity of $u$, we show that
$\mathcal J_{1,\theta}''$ can be approximated by
$\widetilde{\mathcal J}_{1,\theta}'$.
\end{itemize}
 In Lemmas \ref{lem J11}-\ref{lem J14} below, we will prove that the errors of  those approximations  remain sufficiently small for our needs.

    For any $t, \varepsilon>0$,  denote
    \begin{align}\label{eq differ2}
          G_{\alpha}([t,\,  t+\varepsilon],x):=G_{\alpha}(t+\varepsilon,x)-G_{\alpha}(t,x) \ \ \ \ \ \ \ \ \ (t\ge 0, x\in \mathbb R).
         \end{align}

 \begin{lemma}\label{lem J11} For every  $T>0$, $\theta\in(0,1)$ and $p>p_0$,  there exists a finite positive  constant $c_{3,8}$, which does not depend on
  $\varepsilon\in (0,1)$, such that
  \begin{align*}
 \sup_{t\in [0,T]}  \sup_{x\in \mathbb R}  \left\|\mathcal J_{1, \theta} \right\|_{L^p(\Omega)}^2  \le  c_{3,8}\varepsilon^{\frac{\alpha+2H-2}{\alpha}+\left[\frac{\alpha-2H+2}{\alpha}(1-\theta)\right]\wedge  \frac{1-2H}{\alpha} }.
  \end{align*}
  \end{lemma}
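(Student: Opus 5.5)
The plan is to apply the Burkholder-type inequality \eqref{eq BDG} to $\mathcal J_{1,\theta}$ and then estimate the resulting deterministic integrals. Since the claimed exponent only gets larger if we drop the minimum with $\frac{1-2H}{\alpha}$, it suffices to prove the stronger bound $\varepsilon^{\frac{\alpha+2H-2}{\alpha}+\frac{\alpha-2H+2}{\alpha}(1-\theta)}$ (recall $\varepsilon<1$). If $t(\varepsilon)\le 0$ the integral is empty and there is nothing to prove, so assume $t\ge\varepsilon^\theta$.

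\textbf{Step 1: BDG and splitting.} Set $g(s,y):=G_\alpha([t-s,t-s+\varepsilon],x-y)\,\sigma(u(s,y))$ with the notation \eqref{eq differ2}. By \eqref{eq BDG}, $\|\mathcal J_{1,\theta}\|_{L^p(\Omega)}^2$ is bounded by a constant times
$\int_0^{t(\varepsilon)}ds\int_{\mathbb R}\int_{\mathbb R}\|g(s,y)-g(s,z)\|_{L^p(\Omega)}^2|y-z|^{2H-2}dydz$.
As in the proof of Lemma \ref{lem J2}, separate the increment of the kernel from that of $\sigma(u)$. Since $\sigma$ is Lipschitz with $\sigma(0)=0$, \eqref{eq bound} gives $\sup_{s,y}\|\sigma(u(s,y))\|_{L^p(\Omega)}<\infty$ and $\sup_{s,y}\mathcal N_{\frac12-H,p}\sigma(u(s,y))<\infty$. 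With $r:=t-s\in[\varepsilon^\theta,t]$, this reduces the problem to the two terms
\begin{align*}
A&:=\int_{\varepsilon^\theta}^{t}dr\int_{\mathbb R}\int_{\mathbb R}\big|G_\alpha([r,r+\varepsilon],y)-G_\alpha([r,r+\varepsilon],z)\big|^2|y-z|^{2H-2}dydz,\\
B&:=\int_{\varepsilon^\theta}^{t}dr\int_{\mathbb R}G_\alpha([r,r+\varepsilon],y)^2dy .
\end{align*}
All bounds below are uniform in $t\in[0,T]$ and $x\in\mathbb R$, since only translation-invariant kernel quantities remain.

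\textbf{Step 2: the term $B$.} By \eqref{eq grad temp}, $|G_\alpha([r,r+\varepsilon],y)|\le c\,\frac{\varepsilon}{r}G_\alpha(r,y)$. By \eqref{eq scaling}, $\int_{\mathbb R} G_\alpha(r,y)^2dy=c\,r^{-1/\alpha}$. Hence
$$B\le c\,\varepsilon^2\int_{\varepsilon^\theta}^\infty r^{-2-1/\alpha}dr=c\,\varepsilon^{2-\theta\frac{\alpha+1}{\alpha}}.$$
The target exponent equals $2-\theta\frac{\alpha-2H+2}{\alpha}$. Since $\alpha+1\le\alpha-2H+2$ (as $2H<1$) and $\varepsilon<1$, the bound on $B$ is at least as good as required.

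\textbf{Step 3: the term $A$ (main step).} This is the delicate term, because the kernel increment must produce the scaling factor $\varepsilon^{\frac{\alpha+2H-2}{\alpha}}$. I would use the spectral form \eqref{eq H product} of the seminorm. Since $\mathcal F G_\alpha([r,r+\varepsilon],\cdot)(\xi)=e^{-r|\xi|^\alpha}(e^{-\varepsilon|\xi|^\alpha}-1)$, integrating in $r$ first gives
$$A\le c\int_{\mathbb R}(1-e^{-\varepsilon|\xi|^\alpha})^2|\xi|^{1-2H-\alpha}e^{-2\varepsilon^\theta|\xi|^\alpha}d\xi .$$
Substituting $\xi=\varepsilon^{-1/\alpha}\eta$ turns this into
$$\varepsilon^{\frac{\alpha+2H-2}{\alpha}}\int_{\mathbb R}(1-e^{-|\eta|^\alpha})^2|\eta|^{1-2H-\alpha}e^{-2\varepsilon^{\theta-1}|\eta|^\alpha}d\eta .$$
Now use $(1-e^{-|\eta|^\alpha})^2\le|\eta|^{2\alpha}$ and substitute $|\eta|=\varepsilon^{(1-\theta)/\alpha}w$. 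The remaining integral is then bounded by $\varepsilon^{\frac{(1-\theta)(\alpha-2H+2)}{\alpha}}\int_0^\infty w^{\alpha+1-2H}e^{-2w^\alpha}dw$, which is finite. This yields exactly the stronger exponent.

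\textbf{Fallback.} If one prefers to stay in physical space, one can instead combine \eqref{eq grad temp}, \eqref{eq grad spatial} and Proposition \ref{prop green 1}. That route appears to be where the cap $\frac{1-2H}{\alpha}$ comes from, and it still suffices for the lemma as stated. Combining Steps 1--3 gives the lemma.
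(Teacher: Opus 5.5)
Your argument is correct, and it proves more than the lemma states: it gives the exponent $\frac{\alpha+2H-2}{\alpha}+\frac{\alpha-2H+2}{\alpha}(1-\theta)=2-\theta\frac{\alpha-2H+2}{\alpha}$ with no cap.

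The overall structure is the same as the paper's, namely BDG followed by splitting off the $\mathcal N_{\frac12-H,p}\sigma(u)$ term. Your Fourier treatment of $A$ also matches the paper's handling of $J_1$.

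You differ on the term $B$, which is the paper's $J_2$. The paper works in Fourier variables, integrates in $s$, and then bounds $e^{-2\varepsilon^{\theta}|\xi|^{\alpha}}-e^{-2t|\xi|^{\alpha}}$ by $1$. That leaves only $\varepsilon^{\frac{\alpha-1}{\alpha}}=\varepsilon^{\frac{\alpha+2H-2}{\alpha}+\frac{1-2H}{\alpha}}$. Discarding that factor is exactly where the minimum with $\frac{1-2H}{\alpha}$ comes from. It does not come from Proposition \ref{prop green 1}, as your fallback remark guesses.

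You instead use the separation $t-s\ge\varepsilon^{\theta}$ through the pointwise bound \eqref{eq grad temp}, which gives $\varepsilon^{2-\theta\frac{\alpha+1}{\alpha}}$. This beats the target because $2H<1$. Keeping the factor $e^{-2\varepsilon^{\theta}|\xi|^{\alpha}}$ in the paper's Fourier computation would give the same bound.

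The gain does not carry over to Theorem \ref{thm main}. The analogous term $K_2$ in Lemma \ref{lem J12} lives on $t-s\in[0,\varepsilon^{\theta}]$, where there is no such separation. There the kernel integral is genuinely of order $\varepsilon^{\frac{\alpha-1}{\alpha}}$, so the cap $\frac{\alpha-1}{\alpha}$ in $\mathcal G_{\vartheta}$ stays.
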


\begin{proof}
     Set $g_{t,  x, \varepsilon}(s,y) :=  G_{\alpha}([t-s, \, t+\varepsilon-s],x-y)\sigma\big(u(s, y)\big)$. Then $\mathcal J_{1, \theta} =\int_0^{t(\varepsilon)}  \int_{\mathbb R} g_{t, x, \varepsilon}(s,y) W(ds,dy)$. Applying  BDG's inequality  \eqref{eq BDG},  we have
 \begin{equation*} \label{eq A 3}
\begin{split}
\left\|\mathcal J_{1, \theta} \right\|_{L^p(\Omega)}^2
\le  \,  4pc_{2,8}^2 \int_0^{t(\varepsilon)} ds \int_{\mathbb R}dy  \int_{\mathbb R}dz   \left\|   g_{t,  x, \varepsilon}(s,y) -  g_{t,  x, \varepsilon}(s,z)  \right\|^2_{L^p(\Omega)}  |y-z|^{2H-2}.
 \end{split}
\end{equation*}
 By separating the increment of the heat kernel from that of the nonlinear coefficient, we have
\begin{equation}\label{eq A decomp}
\left\|\mathcal J_{1, \theta} \right\|_{L^p(\Omega)}^2 \le 8p c_{2,8}^2  \left[  \int_0^{t(\varepsilon)}  J_{1}(s)ds + \int_0^{t(\varepsilon)}  J_{2}(s)ds\right],
 \end{equation}
 where
\begin{align*}
 J_{1}(s):=&\, \int_{\mathbb R}dy  \int_{\mathbb R}dz  \Big| G_{\alpha}([t-s,\, t+\varepsilon-s],x-y)- G_{\alpha}([t-s,\, t+\varepsilon-s], x-z)  \Big|^2 |y-z|^{2H-2} \left\|\sigma\big(u(s, y)\big) \right\|_{L^p(\Omega)} ^2,\\
 J_{2}(s):=& \, \int_{\mathbb R}dy  \int_{\mathbb R}dz\,  G_{\alpha}([t-s,\, t+\varepsilon-s], x-z)^2
   |y-z|^{2H-2} \left\|\sigma\big(u(s, y)\big)- \sigma\big(u(s, z)\big)\right\|_{L^p(\Omega)} ^2.
 \end{align*}

For $J_{1}(s)$, by   \eqref{eq bound}, a change of variables, Plancherel's identity and the elementary inequality $0\le 1-e^{-x}\le 1\wedge x\,(x\ge0)$, we have
\begin{align} \label{eq A 310}
 \int_0^{t(\varepsilon)}  J_{1}(s) ds
 \le  &\, c_{3,9}\int_0^{t(\varepsilon)}ds    \int_{\mathbb R}dy  \int_{\mathbb R}dh  \Big| G_{\alpha}([t-s,\, t+\varepsilon-s], y)- G_{\alpha}([t-s,\, t+\varepsilon-s],y+h)  \Big|^2
  |h|^{2H-2}\notag \\
= &\,  c_{3,9}\int_0^{t(\varepsilon)}ds   \int_{\mathbb R}d\xi    \, e^{-(t-s)  |\xi|^{\alpha}} \left|e^{- \varepsilon|\xi|^{\alpha}} -1\right|^2   |\xi|^{1-2H} \cdot \int_{\mathbb R}  \left| e^{-i  h}-1\right|^2  |h|^{2H-2} dh\notag\\
\le &\,  c_{3,9} \varepsilon^{\frac{\alpha+2H-2}{\alpha}+\frac{\alpha-2H+2}{\alpha}(1-\theta)}\int_{\mathbb{R}}e^{-|\xi|^{\alpha}}|\xi|^{2\alpha+1-2H}d\xi\cdot \int_{\mathbb R}  \left| e^{-i  h}-1\right|^2  |h|^{2H-2} dh\notag\\
\le &c_{3,10}\varepsilon^{\frac{\alpha+2H-2}{\alpha}+\frac{\alpha-2H+2}{\alpha}(1-\theta)}.
\end{align}

For $J_{2}(s)$,  using Plancherel's identity, we  have
\begin{align} \label{eq A 320}
 \int_0^{t(\varepsilon)}  J_{2}(s) ds
 \le &\, \sup_{s\in [0,T],\,  x\in \mathbb R}  \left[\mathcal N_{\frac{1}{2}-H, p} \sigma(u(s,x)) \right]^2    \int_0^{t(\varepsilon)}ds   \int_{\mathbb R}dz\,  G_{\alpha}([t-s,\, t+\varepsilon-s],x-z)^2    \nonumber\\
\le &\,    c_{3,11} \int_0^{t(\varepsilon)} ds   \int_{\mathbb R}  d\xi  \,   e^{-2(t-s)  |\xi|^{\alpha}} \left|e^{-  \varepsilon |\xi|^{\alpha}} -1\right|^2 \nonumber \\
= &\,  \frac{ c_{3,11}}{2}    \int_{\mathbb R}  \left[e^{-2\e^{\theta}|\xi|^{\alpha}}-e^{-2t|\xi|^{\alpha}} \right]   \left|e^{- \varepsilon   |\xi|^{\alpha}}-1  \right|^2 |\xi|^{-\alpha}d\xi
\le \frac{c_{3,11}}{2} \varepsilon^{\frac{\alpha-1}{\alpha}}.
 \end{align}

The desired result follows from \eqref{eq A decomp}, \eqref{eq A 310} and \eqref{eq A 320}. This completes the proof.
  \end{proof}

\begin{corollary}\label{cor tildeJ1 theta}For every  $T>0$, $\theta\in(0,1)$ and $p>p_0$, there exists a finite positive  constant $c_{3,12}$, which does not depend on
  $\varepsilon\in (0,1)$, such that
 \begin{align*}
 \sup_{t\in [0,T]}  \sup_{x\in \mathbb R}  \left\|\widetilde{\mathcal J}_{1, \theta} \right\|_{L^p(\Omega)}^2  \le  c_{3,12}\varepsilon^{\frac{\alpha+2H-2}{\alpha}+\frac{\alpha-2H+2}{\alpha}(1-\theta) }.
  \end{align*}

\end{corollary}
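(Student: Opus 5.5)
The idea is to exploit the fact that in $\widetilde{\mathcal J}_{1,\theta}$ the random factor $\sigma(u(t,x))$ sits outside a deterministic-integrand Wiener integral. First we write
\[
\widetilde{\mathcal J}_{1,\theta}=\sigma(u(t,x))\,Y_{\varepsilon},\qquad Y_\varepsilon:=\int_0^{t(\varepsilon)}\int_{\mathbb R}G_{\alpha}([t-s,\,t+\varepsilon-s],x-y)\,W(ds,dy).
\]
Hölder's inequality then gives
\[
\|\widetilde{\mathcal J}_{1,\theta}\|_{L^p(\Omega)}\le \|\sigma(u(t,x))\|_{L^{2p}(\Omega)}\,\|Y_\varepsilon\|_{L^{2p}(\Omega)}.
\]
The first factor is bounded uniformly in $(t,x)\in[0,T]\times\mathbb R$. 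This follows from $|\sigma(z)|\le L_\sigma|z|$, which holds because $\sigma(0)=0$ and $\sigma$ is Lipschitz (its derivative is Lipschitz, hence bounded on the relevant scale, as already used in the paper). Combined with the moment bound \eqref{eq bound} applied at level $2p\ge p_0$, this gives the uniform bound.

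Next, $Y_\varepsilon$ is a centered Gaussian random variable, so $\|Y_\varepsilon\|_{L^{2p}(\Omega)}\le C_p\|Y_\varepsilon\|_{L^2(\Omega)}$. By the isometry \eqref{Isometry} together with Plancherel's identity, we get
\[
\|Y_\varepsilon\|_{L^2(\Omega)}^2=c\int_0^{t(\varepsilon)}ds\int_{\mathbb R}e^{-2(t-s)|\xi|^\alpha}\big|e^{-\varepsilon|\xi|^\alpha}-1\big|^2|\xi|^{1-2H}d\xi .
\]
Equivalently, one can invoke \eqref{eq BDG} with a deterministic integrand. In that form it is exactly the term $\int_0^{t(\varepsilon)}J_1(s)\,ds$ from the proof of Lemma \ref{lem J11}, with the factor $\|\sigma(u(s,y))\|_{L^p}^2$ replaced by $1$. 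Crucially, no $J_2$-type term arises, because the integrand has no random spatial dependence. This is why the exponent carries no $\wedge\frac{1-2H}{\alpha}$.

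The key computation is then the same as in \eqref{eq A 310}. We integrate in $s$ first to get a factor $e^{-2\varepsilon^\theta|\xi|^\alpha}|\xi|^{-\alpha}$. We then use $|e^{-\varepsilon|\xi|^\alpha}-1|^2\le \varepsilon^2|\xi|^{2\alpha}$ and substitute $\xi=\varepsilon^{-\theta/\alpha}\zeta$. This yields the bound
\[
\varepsilon^{2}\,\varepsilon^{-\theta(\alpha+2-2H)/\alpha}\int_{\mathbb R}e^{-2|\zeta|^\alpha}|\zeta|^{\alpha+1-2H}d\zeta ,
\]
and the last integral is finite. Since $2-\theta\frac{\alpha+2-2H}{\alpha}=\frac{\alpha+2H-2}{\alpha}+\frac{\alpha-2H+2}{\alpha}(1-\theta)$, this is exactly the claimed exponent.

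The only delicate point is keeping track of this exponent bookkeeping and checking that the frequency integral converges at both $0$ and $\infty$. It converges at $0$ because $\alpha+1-2H>-1$, and at $\infty$ because of the exponential factor. Squaring the Hölder bound and taking suprema over $t\in[0,T]$, $x\in\mathbb R$ then completes the argument.
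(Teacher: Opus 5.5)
Your proof is correct and is essentially the argument the paper intends: the corollary is stated without proof as the $\sigma\equiv 1$ case of the computation \eqref{eq A 310}, with no $J_2$-type term, after $\sigma(u(t,x))$ is split off by H\"older's inequality and \eqref{eq bound}. One small correction: a Lipschitz $\sigma'$ does not by itself make $\sigma$ globally Lipschitz, but $|\sigma(z)|\le |\sigma'(0)||z|+\tfrac{L}{2}z^2$ (with $L$ the Lipschitz constant of $\sigma'$) together with \eqref{eq bound} at level $4p$ still bounds $\|\sigma(u(t,x))\|_{L^{2p}(\Omega)}$ uniformly in $(t,x)$, so your conclusion is unaffected.
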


    \begin{lemma}\label{lem J12} For  every  $T>0$,  $\theta\in (0,1)$, $p>p_0$, $0\le \vartheta< \frac{1-2H}{2\alpha}$ and $\vartheta\le \vartheta_0$, there exists a
     finite positive constant $c_{3,13}$ such that  for all  $\varepsilon\in (0,1)$,
          \begin{align*}
           \sup_{t\in [0,T]} \sup_{x\in \mathbb R}\left\|\mathcal J_{1, \theta}'-\mathcal J_{1, \theta}'' \right\|_{L^p(\Omega)}^2 \le c_{3,13} \varepsilon^{\frac{\alpha-1}{\alpha}\wedge \left[\theta\left(2\vartheta+\frac{\alpha+2H-2}{\alpha}\right)\right]}.
              \end{align*}

          \end{lemma}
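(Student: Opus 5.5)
We follow the scheme of Lemma \ref{lem J2} and Lemma \ref{lem J11}. Put
\[
g(s,y):=G_{\alpha}([t-s,\,t+\varepsilon-s],x-y)\left[\sigma(u(s,y))-\sigma(u(t(\varepsilon),x))\right],
\]
so that $\mathcal J_{1,\theta}'-\mathcal J_{1,\theta}''=\int_{t(\varepsilon)}^t\int_{\mathbb R} g(s,y)\,W(ds,dy)$. Here $\sigma(u(t(\varepsilon),x))$ is $\mathcal F_{t(\varepsilon)}$-measurable, so the integrand is predictable on $[t(\varepsilon),t]$. The BDG inequality \eqref{eq BDG} then gives
\[
\|\mathcal J_{1,\theta}'-\mathcal J_{1,\theta}''\|_{L^p(\Omega)}^2\le 8pc_{2,8}^2\int_{t(\varepsilon)}^t \left[K_1(s)+K_2(s)\right]ds .
\]
Here $K_1$ carries the spatial increment of the kernel difference $G_{\alpha}([t-s,t+\varepsilon-s],\cdot)$ weighted by $\|\sigma(u(s,y))-\sigma(u(t(\varepsilon),x))\|_{L^p(\Omega)}^2$. $K_2$ carries $G_{\alpha}([t-s,t+\varepsilon-s],x-z)^2$ against the increment $\|\sigma(u(s,y))-\sigma(u(s,z))\|^2_{L^p(\Omega)}$.

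\textbf{The term $K_2$.} Exactly as in \eqref{eq A 320}, use \eqref{eq bound} with the Lipschitz property of $\sigma$ to bound $\sup \mathcal N_{1/2-H,p}\sigma(u)$. Then apply Plancherel and integrate in $s$:
\[
\int_{t(\varepsilon)}^t K_2(s)\,ds\lesssim\int_{\mathbb R}|e^{-\varepsilon|\xi|^\alpha}-1|^2|\xi|^{-\alpha}\,d\xi\lesssim \varepsilon^{\frac{\alpha-1}{\alpha}}.
\]
This produces the first exponent in the minimum.

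\textbf{The term $K_1$.} By \eqref{eq Holder}, Remark \ref{remark1} and the Lipschitz property of $\sigma$, for $\vartheta\le\vartheta_0$,
\[
\|\sigma(u(s,y))-\sigma(u(t(\varepsilon),x))\|^2_{L^p(\Omega)}\lesssim (s-t(\varepsilon))^{2\vartheta}+|y-x|^{2\alpha\vartheta}\le \varepsilon^{2\theta\vartheta}+|y-x|^{2\alpha\vartheta}.
\]
(If the available Hölder bound is on the squared norm with exponent $\vartheta$, one uses that version consistently; the exponent bookkeeping is the same.)

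For the first piece, Plancherel as in \eqref{eq A 310} gives
\[
\varepsilon^{2\theta\vartheta}\int_{t(\varepsilon)}^t ds\int_{\mathbb R} e^{-2(t-s)|\xi|^\alpha}\left|e^{-\varepsilon|\xi|^\alpha}-1\right|^2|\xi|^{1-2H}\,d\xi .
\]
Bound $|e^{-\varepsilon|\xi|^\alpha}-1|^2\le 1$ and integrate in $s$ over an interval of length $\varepsilon^\theta$. With $r=\varepsilon^\theta$ the remaining integral is
\[
\int_{\mathbb R}\left(1-e^{-2r|\xi|^\alpha}\right)|\xi|^{1-2H-\alpha}\,d\xi\simeq r^{\frac{\alpha+2H-2}{\alpha}} .
\]
This integral converges because $H<1/2$ and $\alpha+2H-2>0$. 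The first piece is therefore $\lesssim\varepsilon^{\theta(2\vartheta+\frac{\alpha+2H-2}{\alpha})}$, which is the second exponent.

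The weighted piece is the main obstacle. It is
\[
\int_{t(\varepsilon)}^t ds\int\!\!\int\left|G_\alpha([t-s,t+\varepsilon-s],y)-G_\alpha([t-s,t+\varepsilon-s],z)\right|^2|y-z|^{2H-2}|y|^{2\alpha\vartheta}\,dy\,dz,
\]
and the kernel difference does not scale to a single time. I would bound the difference of increments by the sum of the spatial increments of $G_\alpha(t+\varepsilon-s,\cdot)$ and of $G_\alpha(t-s,\cdot)$. Each is then handled by \eqref{eq scaling} and Proposition \ref{prop green 1}, applied with $2\vartheta$ in place of $\theta$; this is exactly where the hypothesis $2\vartheta<\frac{1-2H}{\alpha}$ is used. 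This yields $(t-s)^{2\vartheta+\frac{2H-2}{\alpha}}$ plus the analogous term at time $t+\varepsilon-s$. Integrating over $s\in(t(\varepsilon),t)$, the exponent exceeds $-1$ since $\alpha+2H-2>0$. This gives $\lesssim (\varepsilon^\theta+\varepsilon)^{2\vartheta+\frac{\alpha+2H-2}{\alpha}}\lesssim\varepsilon^{\theta(2\vartheta+\frac{\alpha+2H-2}{\alpha})}$, since $\theta<1$.

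Collecting the two terms and taking the minimum of the exponents, uniformly in $t\in[0,T]$ and $x\in\mathbb R$, gives the claim.
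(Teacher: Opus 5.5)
Your proposal is correct and follows the paper's proof essentially step for step. It uses the same predictability remark and BDG split into $K_1$ and $K_2$, the same Plancherel bound $\varepsilon^{\frac{\alpha-1}{\alpha}}$ for $K_2$, and the same treatment of the weighted piece of $K_1$ via the triangle inequality, the scaling \eqref{eq scaling} and Proposition \ref{prop green 1} with $2\vartheta$ in place of $\theta$. The only deviation is in the unweighted piece: you bound $|e^{-\varepsilon|\xi|^{\alpha}}-1|^2\le 1$ and use the length $\varepsilon^{\theta}$ of the time interval to get $\varepsilon^{\theta(2\vartheta+\frac{\alpha+2H-2}{\alpha})}$, whereas the paper integrates the time factor to $\frac{1}{2}|\xi|^{-\alpha}$ and rescales in $\varepsilon$ to get the sharper $\varepsilon^{\frac{\alpha+2H-2}{\alpha}+2\theta\vartheta}$. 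This changes nothing, since the weighted piece already determines the exponent in the statement.
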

          \begin{proof}
          Since $u\big(t(\e), x\big)\in \mathcal F_{t(\e)}$,
     $\sigma\big(u(t(\e), x)\big)$ can be put  inside the stochastic integral $\mathcal J_{1, \theta}''$. Set $
   g_{t,  x, \varepsilon}(s,y) :=  G_{\alpha}([t-s,\, t+\varepsilon-s],x-y) \left[\sigma\big(u(s, y)\big)- \sigma\big(u(t(\varepsilon), x)\big)\right]$. Then $\mathcal J_{1, \theta}'-\mathcal J_{1, \theta}''=\int_{t(\varepsilon)}^{t } \int_{\mathbb R} g_{t, x, \varepsilon}(s,y) W(ds,dy)$.  Applying BDG's inequality  \eqref{eq BDG},  we have

 \begin{equation*} \label{eq A 2}
\begin{split}
 \left\|\mathcal J_{1, \theta}'-\mathcal J_{1, \theta}'' \right\|_{L^p(\Omega)}^2
\le  \,  4pc_{2,8}^2 \int_{t(\varepsilon)}^t ds \int_{\mathbb R}dy  \int_{\mathbb R}dz   \left\|   g_{t,  x, \varepsilon}(s,y) -  g_{t,  x, \varepsilon}(s,z)  \right\|^2_{L^p(\Omega)}  |y-z|^{2H-2}.
 \end{split}
\end{equation*}
 By separating the increment of the heat kernel from that of the nonlinear coefficient, we have
 \begin{equation}\label{eq A decomp1}
 \left\|\mathcal J_{1, \theta}'-\mathcal J_{1, \theta}'' \right\|_{L^p(\Omega)}^2\le  8pc_{2,8}^2\left[  \int_{t(\varepsilon)}^t  K_{1}(s)ds + \int_{t(\varepsilon)}^t  K_{2}(s)ds\right],
\end{equation}
 where
\begin{align*} \label{eq A 11}
 K_{1}(s):=&\, \int_{\mathbb R}dy  \int_{\mathbb R}dz  \Big| G_{\alpha}([t-s,\, t+\varepsilon-s],x-y)- G_{\alpha}([t-s,\, t+\varepsilon-s], x-z)  \Big|^2\\
&  \ \,\,\,  \cdot  |y-z|^{2H-2} \left\|\sigma\big(u(s, y)\big)- \sigma\big(u(t(\varepsilon), x)\big)\right\|_{L^p(\Omega)} ^2,\\
 K_{2}(s):= &\, \int_{\mathbb R}dy  \int_{\mathbb R}dz\,  G_{\alpha}([t-s,\, t+\varepsilon-s],\,x-z)^2
   |y-z|^{2H-2} \left\|\sigma\big(u(s, y)\big)- \sigma\big(u(s, z)\big)\right\|_{L^p(\Omega)} ^2.
 \end{align*}

For $ K_{1}(s)$, by   \eqref{eq Holder} and Remark \ref{remark1}, there exists a positive constant $c_{3,14}$ that for any $\vartheta\le \vartheta_0$,
    \begin{equation*} \label{eq A 111}
\begin{split}
 K_{1}(s)
  \le&\  c_{3,14}   \big(s-t(\varepsilon)\big)^{2\vartheta}
 \int_{\mathbb R}dy  \int_{\mathbb R}dz    \Big| G_{\alpha}([t-s,\, t+\varepsilon-s],x-y)- G_{\alpha}([t-s,\, t+\varepsilon-s], x-z)  \Big|^2|y-z|^{2H-2} \\
&\,+ c_{3,14}   \int_{\mathbb R}dy  \int_{\mathbb R}dz   \Big| G_{\alpha}([t-s,\, t+\varepsilon-s],x-y)- G_{\alpha}([t-s,\, t+\varepsilon-s], x-z)  \Big|^2 |y-z|^{2H-2}  |y-x|^{2\alpha\vartheta}\\
=:&\,c_{3,14}\left[ K_{1, 1}(s)+ K_{1, 2}(s)\right].
\end{split}
\end{equation*}
   Since $|s-t(\varepsilon)|\le \varepsilon^{\theta}$ for any $s\in [t(\varepsilon), t]$,  by Plancherel's identity and a change of variables, we have
 \begin{align} \label{eq A 112}
&\int_{t(\varepsilon)}^t  K_{1,1}(s)ds\notag\\
\le &\,  \varepsilon^{2\theta \vartheta} \int_{t(\varepsilon)}^t ds   \int_{\mathbb R}d\xi  \int_{\mathbb R}dh\,  e^{-2(t-s)|\xi|^{\alpha}} \left|e^{- \varepsilon |\xi|^{\alpha}}- 1 \right|^2  \left| e^{-i \xi h}-1 \right|^2|h|^{2H-2}\notag\\
\le &\,  \varepsilon^{2\theta \vartheta} \int_{t(\varepsilon)}^t ds   \int_{\mathbb R}d\xi      e^{-2(t-s)|\xi|^{\alpha}} \left|e^{- \varepsilon |\xi|^{\alpha}}- 1 \right|^2 |\xi|^{1-2H} \cdot  \int_{\mathbb R} \left| e^{-i  h}-1 \right|^2 |h|^{2H-2} dh \notag \\
\le &\,\frac{1}{2}\varepsilon^{\frac{\alpha+2H-2}{\alpha}+2\theta \vartheta}  \int_{\mathbb R}d\xi     \left|1- e^{-   |\xi|^{\alpha}} \right|^2 |\xi|^{1-2H-\alpha} \cdot  \int_{\mathbb R} \left| e^{-i  h}-1 \right|^2 |h|^{2H-2} dh
  \le \,c_{3,15} \varepsilon^{\frac{\alpha+2H-2}{\alpha}+2\theta \vartheta}.
 \end{align}
  By a change of variables, we have
\begin{align*}
K_{1,2}(s)
&\le 2\int_{\mathbb{R}}dy\int_{\mathbb{R}}dh\left|G_{\alpha}(t+\varepsilon-s,y+h)-G_{\alpha}(t+\varepsilon-s,y)\right|^2|h|^{2H-2}|y|^{2\alpha\vartheta}\\
&\hskip12pt+ 2\int_{\mathbb{R}}dy\int_{\mathbb{R}}dh\left|G_{\alpha}(t-s,y+h)-G_{\alpha}(t-s,y)\right|^2|h|^{2H-2}|y|^{2\alpha\vartheta}.
\end{align*}
By \eqref{eq scaling}, a change of variables and Proposition \ref{prop green 1}, we have for any $0\le \vartheta< \frac{1-2H}{2\alpha}$,
\begin{align*}
&\int_{\mathbb{R}}dy\int_{\mathbb{R}}dh\left|G_{\alpha}(t+\varepsilon-s,y+h)-G_{\alpha}(t+\varepsilon-s,y)\right|^2|h|^{2H-2}|y|^{2\alpha\vartheta}\\
=&(t+\varepsilon-s)^{-\frac{2}{\alpha}}\int_{\mathbb{R}}dy\int_{\mathbb{R}}dh\left|G_{\alpha}(1,(t+\varepsilon-s)^{-\frac1{\alpha}}(y+h))-G_{\alpha}(1,(t+\varepsilon-s)^{-\frac1{\alpha}}y)\right|^2
|h|^{2H-2}|y|^{2\alpha\vartheta}\\
=&(t+\varepsilon-s)^{2\vartheta+\frac{2H-2}{\alpha}}\int_{\mathbb{R}}dy\int_{\mathbb{R}}dh\left|G_{\alpha}(1,y+h)-G_{\alpha}(1,y)\right|^2|h|^{2H-2}|y|^{2\alpha\vartheta}
\le c_{3,16} (t+\varepsilon-s)^{2\vartheta+\frac{2H-2}{\alpha}}.
\end{align*}
Hence for any $0\le \vartheta< \frac{1-2H}{2\alpha}$ with $\vartheta\le \vartheta_0$, $K_{1,2}(s)\le2c_{3,16}\left((t+\varepsilon-s)^{2\vartheta+\frac{2H-2}{\alpha}}+(t-s)^{2\vartheta+\frac{2H-2}{\alpha}}\right)$.
Thus, we have for any for any $0\le \vartheta< \frac{1-2H}{2\alpha}$ with $\vartheta\le \vartheta_0$ and any $0<\theta<1$,
 \begin{align*} \label{eq A 111-2}
\int_{t(\varepsilon)}^t  K_{1,2}(s)ds &\le 2c_{3,16}\left( \left(\varepsilon+\varepsilon^{\theta}\right)^{2\vartheta+\frac{\alpha+2H-2}{\alpha}}+\varepsilon^{\theta\left(2\vartheta+\frac{\alpha+2H-2}{\alpha}\right)}\right)
\le c_{3,17}\varepsilon^{\theta\left(2\vartheta+\frac{\alpha+2H-2}{\alpha}\right)},
\end{align*}
where we use the fact that $\alpha+2H-2>0$.
Combining this with \eqref{eq A 112}, we find that for any  $\theta\in(0,1)$,
\begin{equation} \label{eq A 11-1}
\begin{split}
 \int_{t(\varepsilon)}^t K_{1}(s) ds\le     \,  c_{3,18}  \varepsilon^{\theta\left(2\vartheta+\frac{\alpha+2H-2}{\alpha}\right)}.
 \end{split}
\end{equation}

 For $ K_{2}(s)$, using the same argument as that in \eqref{eq A 320},  we have
\begin{equation} \label{eq A 22-2}
\begin{split}
 \int_{t(\varepsilon)}^t K_{2}(s) ds\le    \, c_{3,19}\varepsilon^{\frac{\alpha-1}{\alpha}}.
 \end{split}
\end{equation}

Putting \eqref{eq A decomp1}, \eqref{eq A 11-1} and \eqref{eq A 22-2} together, we conclude that for any $0\le \vartheta< \frac{1-2H}{2\alpha}$, with  $\vartheta\le \vartheta_0$ and any $0<\theta<1$,
\begin{align*}
 \left\|\mathcal J_{1, \theta}'-\mathcal J_{1, \theta}'' \right\|_{L^p(\Omega)}^2\le c_{3,13}\varepsilon^{\frac{\alpha-1}{\alpha}\wedge \left[\theta\left(2\vartheta+\frac{\alpha+2H-2}{\alpha}\right)\right]}.
 \end{align*}
The proof is complete.
    \end{proof}

    \begin{lemma}\label{lem J14} For every $p> p_0$, there exists a finite positive constant $c_{3,20}$, which does not depend on   $\varepsilon \in (0,1)$, such that
          \begin{align}\label{eq J14 est}
           \sup_{t\in [0,T]}\sup_{x\in \mathbb R} \left\| \mathcal J_{1, \theta}''  - \widetilde{\mathcal J}_{1,\theta}'\right\|_{L^p(\Omega)}^2
           \le c_{3,20}\varepsilon^{\theta\left( \frac{\alpha+2H-2}{\alpha}+2\vartheta_0\right)}.
          \end{align}
          \end{lemma}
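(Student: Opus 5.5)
The difference factors as
\[
\mathcal J_{1,\theta}''-\widetilde{\mathcal J}_{1,\theta}'=\big[\sigma(u(t(\varepsilon),x))-\sigma(u(t,x))\big]\cdot Y_\varepsilon,
\qquad
Y_\varepsilon:=\int_{t(\varepsilon)}^t\int_{\mathbb R}G_{\alpha}([t-s,\,t+\varepsilon-s],x-y)\,W(ds,dy).
\]
Here $Y_\varepsilon$ is a centered Gaussian random variable. The random factor $\sigma(u(t,x))$ is not $\mathcal F_{t(\varepsilon)}$-measurable, so it cannot be moved inside the integral. I would therefore not use the BDG inequality on the product. Instead I would apply H\"older's inequality directly:
\[
\big\|\mathcal J_{1,\theta}''-\widetilde{\mathcal J}_{1,\theta}'\big\|_{L^p(\Omega)}\le \big\|\sigma(u(t(\varepsilon),x))-\sigma(u(t,x))\big\|_{L^{2p}(\Omega)}\,\|Y_\varepsilon\|_{L^{2p}(\Omega)}.
\]
It then suffices to bound the two factors separately.

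\textbf{First factor.} Since $\sigma'$ is Lipschitz, $\sigma'$ has at most linear growth. Together with the moment bound \eqref{eq bound}, this gives
\[
|\sigma(a)-\sigma(b)|\le C(1+|a|+|b|)|a-b|.
\]
Applying H\"older once more and then the temporal H\"older estimate \eqref{eq Holder} (valid by Remark \ref{remark1}), with $t-t(\varepsilon)=\varepsilon^{\theta}$, yields
\[
\big\|\sigma(u(t(\varepsilon),x))-\sigma(u(t,x))\big\|_{L^{2p}(\Omega)}^2\le C\,\varepsilon^{2\theta\vartheta_0},
\]
uniformly in $t\in[0,T]$ and $x\in\mathbb R$. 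Alternatively, one can note that $\sigma$ is globally Lipschitz with constant $L_\sigma$, as the paper already uses in Lemma \ref{lem J2}.

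\textbf{Second factor.} Since $Y_\varepsilon$ is Gaussian, its $L^{2p}$ norm is a constant multiple of its $L^2$ norm. By the isometry \eqref{Isometry} and Plancherel,
\[
\|Y_\varepsilon\|_{L^2(\Omega)}^2=c_{1,1}\int_{t(\varepsilon)}^t ds\int_{\mathbb R}e^{-2(t-s)|\xi|^{\alpha}}\big|1-e^{-\varepsilon|\xi|^{\alpha}}\big|^2|\xi|^{1-2H}d\xi .
\]
Integrating in $s$ over $[t-\varepsilon^{\theta},t]$ gives at most $(2|\xi|^{\alpha})^{-1}\big(1-e^{-2\varepsilon^{\theta}|\xi|^{\alpha}}\big)$. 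The resulting bound has two parts.

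The simple route bounds $1-e^{-2\varepsilon^\theta|\xi|^\alpha}\le 1$ and uses $|1-e^{-\varepsilon|\xi|^\alpha}|^2\le 1\wedge(\varepsilon|\xi|^{\alpha})^2$. This gives order $\varepsilon^{\frac{\alpha+2H-2}{\alpha}}$, as in \eqref{eq A 112}, which is sharper than needed since $\theta<1$.

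To match the stated exponent exactly, one can instead use $|1-e^{-\varepsilon|\xi|^\alpha}|\le 1$ and the substitution $\xi\mapsto \varepsilon^{-\theta/\alpha}\xi$, obtaining
\[
\varepsilon^{\theta\frac{\alpha+2H-2}{\alpha}}\int_{\mathbb R}\big(1-e^{-2|\xi|^\alpha}\big)|\xi|^{1-2H-\alpha}d\xi .
\]
The integral is finite: near $0$ the integrand behaves like $|\xi|^{1-2H}$, and near infinity like $|\xi|^{1-2H-\alpha}$, which is integrable because $\alpha+2H-2>0$.

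\textbf{Combining.} Multiplying the two bounds gives
\[
\varepsilon^{2\theta\vartheta_0+\theta\frac{\alpha+2H-2}{\alpha}},
\]
which is \eqref{eq J14 est}.

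\textbf{Main obstacle.} The only delicate point is the handling of the non-adapted product. This is exactly why H\"older's inequality is used instead of BDG, and it requires the moment bounds \eqref{eq bound} to hold for all large $p$, which is guaranteed for $p\ge p_0$. The constants are uniform in $t$ and $x$ because both \eqref{eq bound} and \eqref{eq Holder} are uniform.
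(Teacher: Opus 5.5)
Your proof is correct and follows the same route as the paper. Both split off the factor $\sigma(u(t(\varepsilon),x))-\sigma(u(t,x))$ by Cauchy--Schwarz, bound it with the temporal H\"older estimate \eqref{eq Holder}, and then bound the Gaussian integral $Y_\varepsilon$ separately.

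The only difference is in that last step. The paper bounds $\|Y_\varepsilon\|_{L^{2p}(\Omega)}$ via BDG, after splitting the kernel difference into its two terms and applying Lemma \ref{lem int p} to each. Your Plancherel computation keeps the cancellation in the difference, and your simple route even gives a bound of order $\varepsilon^{\frac{\alpha+2H-2}{\alpha}}$. Your linear-growth argument also covers (A2) without assuming $\sigma$ is globally Lipschitz.
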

              \begin{proof}
            By the Cauchy-Schwarz inequality, \eqref{eq Holder},  Remark \ref{remark1}, the Lipschitz continuity of $\sigma$, BDG's inequality \eqref{eq BDG} and Lemma \ref{lem int p}, we have
            \begin{align*}\label{eq J1a 6}
        \left\|\mathcal J_{1, \theta}''-   \widetilde{\mathcal J}_{1,\theta} '\right\|_{L^{p}(\Omega)}^2
  \le &\, c_{3,21}  L_{\sigma}^2 \varepsilon^{2\vartheta_0\theta}  \left\|\int_{t(\varepsilon)}^t\int_{\mathbb R}G_{\alpha}([t-s,t+\varepsilon-s],x-y) W(ds,dy)\right\|_{L^{2p}(\Omega)}^2\\
  \le &\, 2c_{3,22}\varepsilon^{2\vartheta_0\theta} \int_{t(\varepsilon)}^t ds   \int_{\mathbb R}dy  \int_{\mathbb R}dz   \big|G_{\alpha}(t+\varepsilon-s,  x-y)- G_{\alpha}(t+\varepsilon-s,  x-z)  \Big|^2     |y-z|^{2H-2} \nonumber\\
 &\,+2c_{3,22}\varepsilon^{2\vartheta_0\theta} \int_{t(\varepsilon)}^t ds   \int_{\mathbb R}dy  \int_{\mathbb R}dz   \big|G_{\alpha}(t-s,  x-y)- G_{\alpha}(t-s,  x-z)  \Big|^2|y-z|^{2H-2}\nonumber \\
  \le &\, c_{3,23}\varepsilon^{2\vartheta_0\theta}  \int_{t(\varepsilon)}^t (t+\varepsilon-s)^{\frac{2H-2}{\alpha}}ds+c_{3,23} \varepsilon^{2\vartheta_0\theta} \int_{t(\varepsilon)}^t (t-s)^{\frac{2H-2}{\alpha}}ds
  \le  c_{3,24}\varepsilon^{2\vartheta_0\theta} \cdot \varepsilon^{\theta{\frac{\alpha+2H-2}{\alpha}}}.
 \end{align*}
The proof is complete.
          \end{proof}

\subsection{Proofs of the results in Section \ref{sec main}}
We prove those results by using the argument in \cite[Proposition 4.6]{KSXZ2013}, \cite[Theorem 1.2]{WX2024} and \cite[Theorem 1.2]{QianWAngXiao2026}.

\begin{proof}[Proof of Theorem \ref{thm main}]
  For any $\vartheta \in (0, \vartheta_0] \cap (0, \frac{1-2H}{2\alpha}]$, let
\begin{equation}\label{DEF_theta}
    \theta := \frac{1}{1+\vartheta}.
\end{equation}
For this particular choice of $\theta$,
\begin{equation*}
    \begin{split}
        \frac{\alpha+2H-2}{\alpha} &+ \left[ \frac{\alpha-2H+2}{\alpha}(1-\theta) \right] \wedge \frac{1-2H}{\alpha}
        = \frac{\alpha-1}{\alpha} \wedge \left( \theta \left( 2\vartheta + \frac{\alpha+2H-2}{\alpha} \right) \right)
        =: \mathcal{G}_{\vartheta}.
    \end{split}
\end{equation*}
  Note that $\mathcal{G}_{\vartheta} = \frac{\alpha+2H-2}{\alpha} + \left[ \frac{(\alpha-2H+2)\vartheta}{\alpha(1+\vartheta)} \wedge \frac{1-2H}{\alpha} \right] \in \left( \frac{\alpha+2H-2}{\alpha}, \frac{\alpha-1}{\alpha} \right]$.

   By Minkowski's inequality, Jensen's inequality and Lemmas \ref{lem J12} and  \ref{lem J14}, we obtain that for any $\eta_1 \in \left( \frac{\alpha+2H-2}{\alpha}, \mathcal{G}_{\vartheta} \right)$,
  \begin{align}\label{eq:J1_theta_est}
    \left\|  \mathcal J_{1,\theta}'- \widetilde{\mathcal J}_{1,\theta}' \right\|_{L^p(\Omega)} \leq c_{4,1} \, \varepsilon^{\frac{\eta_1}{2}},
  \end{align}
  for some finite positive constant $c_{4,1}$ independent of $\varepsilon \in (0,1)$, $t \in [0,T]$ and $x \in \mathbb{R}$.

  For any interval $Q \subset [0,T]$, denote $\Lambda(Q) := \int_{Q \times \mathbb{R}} \left[ G_{\alpha}(t+\varepsilon-s, x-y) - G_{\alpha}(t-s, x-y) \right] W(ds,dy)$. Then $\widetilde{\mathcal J}_{1,\theta}'  = \sigma(u(t, x)) \Lambda([t-\varepsilon^{\theta}, t])$ by \eqref{LAbJ1theta}. By \eqref{eq bound} and the Cauchy-Schwarz inequality, we obtain that
  \begin{align}\label{eq:Lambda_bound}
    \bigl\| \widetilde{\mathcal J}_{1,\theta}'  - \sigma(u(t, x)) \Lambda([0,t]) \bigr\|_{L^p(\Omega)}
    \leq c_{4,2} \bigl\| \Lambda([t-\varepsilon^{\theta}, t]) \bigr\|_{L^p(\Omega)}.
  \end{align}
  Since $\Lambda([0, t-\varepsilon^{\theta}])$ equals $\mathcal{J}_{1,\theta}$ when $\sigma = 1$, we may apply Lemma \ref{lem J11} to the linear equation \eqref{eq SFHE linear} in order to see that for any $\eta_1 \in \left( \frac{\alpha+2H-2}{\alpha}, \mathcal{G}_{\vartheta} \right)$,
  \begin{align}\label{eq:Lambda_est}
    \bigl\| \Lambda([0, t-\varepsilon^{\theta}]) \bigr\|_{L^p(\Omega)} \leq c_{4,3} \, \varepsilon^{\frac{\eta_1}{2}}.
  \end{align}

  Combining \eqref{eq:J1_theta_est}--\eqref{eq:Lambda_est} together, we obtain that for every $p \geq 1$ and $\eta_1 \in \left( \frac{\alpha+2H-2}{\alpha}, \mathcal{G}_{\vartheta} \right)$, there exists a positive constant $c_{4,4}$ independent of $\varepsilon \in (0,1)$, $t \in [0,T]$ and $x \in \mathbb{R}$, such that
  \begin{align}\label{Est_ADD1}
    \left\|  \widetilde{\mathcal J}_{1,\theta}' - \sigma\left( u(t(\varepsilon), x) \right) \int_{0}^{t} \int_{\mathbb R} \left[ G_{\alpha}(t+\varepsilon-s, x-y) - G_{\alpha}(t-s, x-y) \right] W(ds,dy) \right\| \leq c_{4,4} \, \varepsilon^{\frac{\eta_1}{2}}.
  \end{align}

  Together with \eqref{Est_ADD1}, Lemma \ref{lem J2}, Lemma \ref{lem J11} and Corollary \ref{cor tildeJ1 theta}, this implies that \eqref{eq main} holds for every $\eta \in \left( \frac{\alpha+2H-2}{\alpha}, \mathcal{G}_{\vartheta} \right)$. 

In addition, by Remark \ref{remark1},   the initial condition $u_0$ is H\"older continuous with order $ \beta_0$. If $ \beta_0>\frac{\alpha+2H-2}{\alpha}$, then
  by \cite[Proposition 2.6]{KS2023} and \eqref{eq scaling},  there exists some constant $c_{4,5}>0$   such that
   \begin{align}\label{eq J0_bound1}
|\mathcal{J}_0| = \left|\int_{\mathbb{R}} \bigl(G_\alpha(t+\varepsilon,x-y)-G_\alpha(t,x-y)\bigr) u_0(y) \, dy\right| \le c_{4,5} \, \e^{\frac{\beta_0}{2}}.
\end{align}
 Here, the positive  constant  $c_{4,5}$  does not depend on  $\e\in (0,1)$, $t\in [0,T]$   and $x\in \mathbb R$.
Putting   \eqref{eq main} and \eqref{eq J0_bound1} together, we obtain  \eqref{eq main 2}.  The proof of Theorem \ref{thm main}   is complete.
                   \end{proof}

Under Condition \ref{cond A}, for any fixed $t>0$ and $x, y\in \mathbb{R}$, by \cite[Lemma 2.2]{ChenZhang2016*}, we have
\begin{align*}
 \left|\int_{\mathbb{R}}G_{\alpha}(t,x-z)u_0(z)dz-\int_{\mathbb{R}}G_{\alpha}(t,y-z)u_0(z)dz\right|
  \leq c_{4,6}\frac{|x-y|}{t^{\frac{1}{\alpha}}}\left\|u_0(z)\right\|_{L^{\infty}(\mathbb{R})},
\end{align*}
  for some constant $c_{4,6}>0$.  Hence, by  \cite[Proposition 2.6]{KS2023}, there exists  a constant $c_{4,7}=c(T_1, T_2)>0$ such that for all $0<T_1\leq t \leq T_2$, $\e\in(0,1)$ and $x\in\mathbb{R}$,
   \begin{align}\label{eq J01}
  \left| \left[G_{\alpha}(t+\e,\cdot)-G_{\alpha}(t,\cdot)\right]\ast u_0(x)\right|\leq c_{4,7}\e^{{\frac{1}{2}}}.
   \end{align}
   Putting    \eqref{eq main} and  \eqref{eq J01} together,  there exists a positive constant $c_{4,8}>0$ such that for all   $\e>0$, $\eta\in \mathcal{I}$  and $T_2>T_1>0$,
        \begin{equation*}
        \begin{split}
      \sup_{t\in [T_1, T_2]}\sup_{x\in \mathbb R}  \Big\| (\mathcal{D}_{\e}u)(t,x)-\sigma(u(t, x)) (\mathcal{D}_{\e}v)(t,x)  \Big\|_{L^p(\Omega)} \le & c_{4,8}\,   \varepsilon^{\frac{\alpha+2H-2}{2\alpha}+\eta}.
      \end{split}
         \end{equation*}

\begin{proof}[Proof of Corollary \ref{cor  t LIL} ]

The results of   Corollary \ref{cor  t LIL} at $t>0$  follow  from Lemma  \ref{lem 51}, the decomposition \eqref{eq decom},     and  Khinchin's and Chung's laws of the iterated logarithm for the fBm (e.g., \cite[Section 7]{LS01*}, \cite{MR1995}, \cite{Tal94}).
 The results  at  the origin  follow  from  Lemma  \ref{lem 51},  Lemma \ref{corotLIL0}, and Theorem \ref{thm main}. The proof is complete.
\end{proof}

\section{Appendix}

The following technical lemma is taken from Liu and Mao \cite{LM2022}. Note that $G_{\alpha}(t,x)$ in this paper coincides with $G_{\alpha,\delta}(t,x)$ in \cite{LM2022}) in the case $\delta=0$.

\begin{lemma}\label{lem int p}\cite[Lemma 2]{LM2022} For any $\beta\in (0,1)$ and $s\in [0,T]$, there exists a positive constant $c_{5,1}$ such that
$$
\int_{\mathbb R} \left[\mathcal N_{\beta} G_{\alpha,\delta}(s,x) \right]^2dx \le c_{5,1} s^{-\frac{2\beta+1}{\alpha}}.
$$
\end{lemma}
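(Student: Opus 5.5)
We propose to prove the bound by passing to Fourier variables. There the fractional Sobolev seminorm $\mathcal N_\beta$ becomes a multiplier $|\xi|^{2\beta}$, and the only property of the kernel we need is that the modulus of its Fourier transform decays like $e^{-cs|\xi|^\alpha}$ with $c>0$.

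Here $\mathcal N_\beta$ is understood as in \eqref{NBNorm} with the weight $|h|^{-1-2\beta}$, which is $|h|^{2H-2}$ when $\beta=\frac12-H$. We recall that $G_{\alpha,\delta}(s,\cdot)$ is the density of the Riesz--Feller stable law with skewness $\delta$, $|\delta|\le \min(\alpha,2-\alpha)$. Its Fourier transform is
$$
\mathcal F G_{\alpha,\delta}(s,\cdot)(\xi)=\exp\Big(-s|\xi|^{\alpha}e^{-i\delta\pi\,\mathrm{sgn}(\xi)/2}\Big),
$$
so that
$$
|\mathcal F G_{\alpha,\delta}(s,\cdot)(\xi)|=\exp\big(-s\cos(\delta\pi/2)|\xi|^\alpha\big).
$$
Since $\alpha\in(1,2)$, we have $|\delta|\le 2-\alpha<1$, so $c_\delta:=\cos(\delta\pi/2)>0$. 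For $\delta=0$ this reduces to \eqref{Eq: HeatKernel}. In particular $G_{\alpha,\delta}(s,\cdot)\in L^1\cap L^2$ for $s>0$, which makes the Plancherel steps below legitimate.

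\emph{Step 1 (Fourier form of the seminorm).} For $f\in L^2(\mathbb R)$ we apply Fubini and Plancherel in $x$ for each fixed $h$:
$$
\int_{\mathbb R}[\mathcal N_\beta f(x)]^2dx=\frac1{2\pi}\int_{\mathbb R}|\mathcal Ff(\xi)|^2\int_{\mathbb R}|e^{i\xi h}-1|^2|h|^{-1-2\beta}dh\,d\xi .
$$
The substitution $h\mapsto h/|\xi|$ turns the inner integral into $C_\beta|\xi|^{2\beta}$, where $C_\beta=\int_{\mathbb R}|e^{ih}-1|^2|h|^{-1-2\beta}dh$. This constant is finite exactly because $\beta\in(0,1)$: near $0$ the integrand is $\sim h^{1-2\beta}$, and at infinity it is bounded by $4|h|^{-1-2\beta}$. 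The same computation already underlies \eqref{eq H product} and \eqref{eq A 310}.

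\emph{Step 2 (scaling).} Taking $f=G_{\alpha,\delta}(s,\cdot)$ gives
$$
\int_{\mathbb R}[\mathcal N_\beta G_{\alpha,\delta}(s,x)]^2dx=\frac{C_\beta}{2\pi}\int_{\mathbb R}e^{-2c_\delta s|\xi|^\alpha}|\xi|^{2\beta}d\xi .
$$
The substitution $\xi=s^{-1/\alpha}\zeta$ turns the right-hand side into
$$
\frac{C_\beta}{2\pi}\Big(\int_{\mathbb R}e^{-2c_\delta|\zeta|^\alpha}|\zeta|^{2\beta}d\zeta\Big)\,s^{-\frac{2\beta+1}{\alpha}},
$$
and the $\zeta$-integral is finite. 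Hence the bound holds for every $s\in(0,T]$ with $c_{5,1}$ depending only on $\alpha,\beta,\delta$. At $s=0$ the right-hand side is $+\infty$, so there is nothing to prove; the constant is in fact independent of $T$.

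The only point needing care is the first one: confirming that $\mathrm{Re}\big(|\xi|^\alpha e^{-i\delta\pi\,\mathrm{sgn}\xi/2}\big)$ is strictly positive in the admissible range of $\delta$, so that the kernel is square integrable with Gaussian-type Fourier decay. Everything after that is exact computation.
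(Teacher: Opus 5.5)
Your argument is correct and complete. Plancherel turns $\int_{\mathbb R}[\mathcal N_\beta G_{\alpha,\delta}(s,x)]^2dx$ into $\frac{C_\beta}{2\pi}\int_{\mathbb R}e^{-2\cos(\delta\pi/2)s|\xi|^\alpha}|\xi|^{2\beta}d\xi$, and the substitution $\xi=s^{-1/\alpha}\zeta$ then gives the bound as an exact identity, with a constant independent of $T$. For the present paper only $\delta=0$ is needed, and there the phase issue disappears.

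The paper does not prove this statement itself; it imports it from \cite[Lemma 2]{LM2022}. Your Fourier-plus-scaling computation is the standard argument for such a result, and it is the same Plancherel identity the paper uses in \eqref{eq A 310} and \eqref{eq A 112}, so there is nothing substantive to contrast.
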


By adapting the arguments of Lemma 5.1 and Proposition 5.1 in \cite{QianWAngXiao2026}, we obtain the following result.
\begin{lemma}\label{lem 51}
Assume Condition \ref{cond A} holds. Then for every fixed $t\in[0,T]$, $x\in\mathbb{R}$,
and every $\eta\in\mathcal{I}$, with probability one,
\begin{align*}
\sup_{0<\varepsilon<\delta} \big|(\mathcal{D}_{\e}u)(t,x)
-\big[G_{\alpha}(t+\varepsilon,\cdot)-G_{\alpha}(t,\cdot)\big]*u_0(x)
-\sigma(u(t, x)) (\mathcal{D}_{\e}v)(t,x)\big|
= O\big(\delta^{\frac{\alpha+2H-2}{2\alpha}+\eta}\big), \quad \text{as } \delta\downarrow 0.
\end{align*}
\end{lemma}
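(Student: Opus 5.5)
We upgrade the moment bound of Theorem \ref{thm main} to an almost sure bound that is uniform in $\varepsilon\in(0,\delta)$. The approach is a chaining/Kolmogorov argument for the error process
\[
R(\varepsilon):=(\mathcal{D}_{\e}u)(t,x)-\big[G_{\alpha}(t+\varepsilon,\cdot)-G_{\alpha}(t,\cdot)\big]*u_0(x)-\sigma(u(t,x))(\mathcal{D}_{\e}v)(t,x),\qquad \varepsilon\in[0,1],
\]
with $t,x$ fixed and $R(0)=0$. Theorem \ref{thm main} already gives $\|R(\varepsilon)\|_{L^p(\Omega)}\le c\,\varepsilon^{\gamma}$ for every $p\ge1$ and every $\gamma=\frac{\alpha+2H-2}{2\alpha}+\eta'$ with $\eta'\in\mathcal I$. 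Since $\mathcal I$ is an open interval starting at $0$, we may fix $\eta'\in\mathcal I$ with $\eta'>\eta$ and use the slack between them.

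\textbf{Step 1: increment bound.} We show that for $0<\varepsilon'<\varepsilon\le 1$,
\[
\|R(\varepsilon)-R(\varepsilon')\|_{L^p(\Omega)}\le c\,\varepsilon^{\gamma-\rho}\,|\varepsilon-\varepsilon'|^{\rho}
\]
for some small $\rho>0$. There are two regimes. If $|\varepsilon-\varepsilon'|\ge\varepsilon/2$, the bound follows from Theorem \ref{thm main} and the triangle inequality. If $|\varepsilon-\varepsilon'|<\varepsilon/2$, we use crude H\"older bounds. From \eqref{eq Holder}, $\|u(t+\varepsilon,x)-u(t+\varepsilon',x)\|_p\le c|\varepsilon-\varepsilon'|^{\vartheta_0}$. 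The Gaussian term is handled by the analogous linear estimate (Theorem \ref{them existence} applied to $v$) together with the moment bound \eqref{eq bound} for $\sigma(u(t,x))$ and Cauchy--Schwarz. The deterministic $u_0$ term is handled by \cite[Proposition 2.6]{KS2023}, as in \eqref{eq J0_bound1}. Together these give $\|R(\varepsilon)-R(\varepsilon')\|_p\le c|\varepsilon-\varepsilon'|^{\vartheta_0}$. Interpolating the two bounds, $\min(a,b)\le a^{1-\lambda}b^{\lambda}$, yields the claimed form with $\rho=\lambda\vartheta_0$, at the cost of replacing $\gamma$ by $(1-\lambda)\gamma$ in the prefactor. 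Taking $\lambda$ small keeps the exponent above $\frac{\alpha+2H-2}{2\alpha}+\eta$.

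\textbf{Step 2: dyadic blocks.} On each block $[2^{-n-1},2^{-n}]$ we apply the Garsia--Rodemich--Rumsey (or Kolmogorov) inequality with $p$ large. This gives
\[
\Big\|\sup_{\varepsilon\in[2^{-n-1},2^{-n}]}|R(\varepsilon)|\Big\|_{L^p(\Omega)}\le c\,2^{-n\gamma'},
\qquad \gamma'>\frac{\alpha+2H-2}{2\alpha}+\eta,
\]
where the polynomial factor from the modulus is absorbed because $p\rho>1$ for $p$ large. Chebyshev's inequality then gives
\[
\mathbb P\Big(\sup_{[2^{-n-1},2^{-n}]}|R|>2^{-n(\frac{\alpha+2H-2}{2\alpha}+\eta)}\Big)\le c\,2^{-np(\gamma'-\gamma_0)},
\qquad \gamma_0:=\tfrac{\alpha+2H-2}{2\alpha}+\eta,
\]
which is summable. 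By Borel--Cantelli, almost surely the event fails for all large $n$. Taking the supremum over the blocks $n\ge N$ with $2^{-N}\approx\delta$ then gives the $O(\delta^{\gamma_0})$ bound.

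\textbf{Main obstacle and the case $t=0$.} The delicate point is Step 1: Theorem \ref{thm main} is stated only at the fixed base point $t$, so we need the $\vartheta_0$-H\"older increment bounds for each piece of $R$ separately, and these must be uniform. At $t=0$, the $u_0$ term is controlled by the assumed H\"older regularity of $u_0$ through \cite[Proposition 2.6]{KS2023}. If that increment bound proves awkward, the fallback is to write $R(\varepsilon)-R(\varepsilon')$ as stochastic integrals, as in the decomposition $\mathcal J_0+\mathcal J_1+\mathcal J_2$, and bound them directly with \eqref{eq BDG} and Lemma \ref{lem int p}.
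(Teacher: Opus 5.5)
Your argument is correct and is essentially the route the paper takes. The paper gives no details and only defers to the argument of \cite[Lemma 5.1, Proposition 5.1]{QianWAngXiao2026}, which uses the same scheme: upgrade the uniform $L^p$ bound of Theorem~\ref{thm main} to an almost-sure bound via an increment estimate in $\varepsilon$ (from the H\"older bounds on $u$, $v$ and the $u_0$ term), a Kolmogorov/Garsia--Rodemich--Rumsey bound on dyadic blocks, and Borel--Cantelli.

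Two minor points, neither a gap. First, Theorem~\ref{thm main} is uniform in $t\in[0,T]$, not stated only at a fixed base point. So the identity $R_t(\varepsilon)-R_t(\varepsilon')=R_{t+\varepsilon'}(\varepsilon-\varepsilon')+[\sigma(u(t+\varepsilon',x))-\sigma(u(t,x))](\mathcal D_{\varepsilon-\varepsilon'}v)(t+\varepsilon',x)$ gives a sharper increment bound directly, with no interpolation step. Second, the H\"older bound for $v$ should come from the direct Gaussian (Plancherel) computation, because $\sigma\equiv1$ violates the hypothesis $\sigma(0)=0$ of Theorem~\ref{them existence}.
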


 \begin{lemma}\label{corotLIL0}
        Choose and fix   $x\in \mathbb R$.   Then,
        \begin{itemize}
        \item[(a).] (Khinchin's LIL)
        \begin{equation}\label{Eq:LIL0}
        \begin{split}
        \lim_{r \to 0}  \sup_{ 0<t<r}  \frac{|v(t, x)|}{t^{\widetilde{H}/2}\sqrt{2 \log\log(1/t)}}=  \widetilde\kappa, \ \ \ \text{a.s.},
        \end{split}
                 \end{equation}
where $\widetilde{H}=\frac{\alpha+2H-2}{\alpha}$ and $  \widetilde\kappa$ is given by  \eqref{eq con kappa1}.
            \item[(b).]  (Chung's LIL)
       \begin{equation}\label{Eq: CLIL0}
\begin{split}
\liminf_{\e \to 0}\sup_{0\le r\le \e}\frac{|v(r, x)|}{(\e / \log\log(1/\e))^{\widetilde{H}/2}}
= \left( \frac{ \Gamma(2\widetilde{H})}{ \Gamma(\widetilde{H})} \right)^{\frac12}
\lambda_{\widetilde{H}}^{\widetilde{H}/2}, \quad \text{a.s.}
\end{split}
\end{equation}
        where $\widetilde{H}=\frac{\alpha+2H-2}{\alpha}$ and $\lambda_H$ is the small ball constant of a  fBm with index $\frac{\widetilde{H}}{2}$
        (see, e.g., \cite[Theorem 6.9]{LS01*}).
      \end{itemize}
 \end{lemma}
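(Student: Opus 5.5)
The two statements concern the Gaussian process $t\mapsto v(t,x)$ near the origin. Since $v(0,x)=0$ and $x$ is fixed, spatial stationarity lets us take $x=0$. The plan is to compute the covariance of $v(\cdot,x)$ exactly, identify it as a constant multiple of a \emph{bifractional}-type (or sub-fractional) kernel, and then invoke known LILs for such processes.

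\textbf{Covariance.} By the isometry \eqref{Isometry} and Plancherel, for $0<s\le t$,
\begin{align*}
\EE[v(t,x)v(s,x)]=\frac{c_{1,1}}{2\pi}\int_0^s\int_{\RR}e^{-(t+s-2r)|\xi|^{\alpha}}|\xi|^{1-2H}d\xi\, dr .
\end{align*}
The substitution $\eta=(t+s-2r)^{1/\alpha}\xi$ shows that the inner integral equals $C(t+s-2r)^{-(2-2H)/\alpha}$. Writing $\widetilde H=\frac{\alpha+2H-2}{\alpha}$ and integrating in $r$ gives
\begin{align*}
\EE[v(t,x)v(s,x)]=C'\big[(t+s)^{\widetilde H}-(t-s)^{\widetilde H}\big],
\end{align*}
with $C'$ explicit in terms of $c_{1,1}$ and Gamma functions. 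This is, up to normalization, the covariance of a bifractional Brownian motion with parameters $(H',K)=(\tfrac12,\widetilde H)$. Equivalently, by the Lei--Nualart decomposition, it is that of $2^{-\cdots}B^{\widetilde H/2}$ minus an independent process $X$ which is $C^\infty$ on $(0,\infty)$. This matches \eqref{eq decom}.

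\textbf{Normalization.} In particular $\EE[v(t,x)^2]=C'2^{\widetilde H}t^{\widetilde H}$. Comparing constants with \eqref{eq decom} should show that the variance coefficient is $\widetilde\kappa^2$ at the origin, while the increments behave locally like $\kappa^2|t-s|^{\widetilde H}$ away from the origin.

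\textbf{Part (a).} The process $v(\cdot,x)$ is $\widetilde H/2$-self-similar, since the covariance is homogeneous of degree $\widetilde H$. Hence $Y(\tau)=e^{-\tau\widetilde H/2}v(e^{\tau},x)$ is a stationary Gaussian process. Its correlation $\rho(\tau)=\cosh(\tau/2)^{\widetilde H}-|\sinh(\tau/2)|^{\widetilde H}$, up to normalization, satisfies $1-\rho(\tau)\sim c|\tau|^{\widetilde H}$ as $\tau\to0$ and $\rho(\tau)\to0$ exponentially as $\tau\to\infty$. The classical Pickands/Qualls--Watanabe LIL for stationary Gaussian processes then gives
\[
\limsup_{\tau\to-\infty}\frac{Y(\tau)}{\sqrt{2\log|\tau|}}=1\quad\text{a.s.},
\]
after dividing by the standard deviation. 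Translating back with $\tau=\log t$ yields \eqref{Eq:LIL0} with constant $\widetilde\kappa$. The same argument applied to $-v$ handles the absolute value.

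\textbf{Part (b).} Here the sup over $[0,\e]$ is driven by the small-ball behaviour. Write $v=c\,B^{\widetilde H/2}-X$ with $X$ independent and smooth on $(0,\infty)$; near $0$, $X$ is only H\"older of order $\widetilde H/2$, which is the obstacle. I would avoid the decomposition and instead prove the small ball estimate directly:
\[
-\log\mathbb P\Big(\sup_{0\le r\le1}|v(r,x)|\le\epsilon\Big)\sim\kappa^{2/\widetilde H}\lambda_{\widetilde H}\,\epsilon^{-2/\widetilde H}.
\]
The upper bound on the probability would come from Anderson's inequality together with independence, since $v+X=cB$. For the lower bound, I would use weak decorrelation (Li--Linde or Shao) between $cB$ and $X$. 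This works because $X$ has a much smaller small-ball exponent: it is smooth on dyadic shells and self-similar, so its entropy is only logarithmic.

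The Chung LIL then follows by the standard Borel--Cantelli argument along $\e_n=n^{-n}$, using self-similarity for scaling. For the independence step, I would use the exponential decorrelation of the Lamperti-transformed process (or the spectral representation over disjoint time blocks in $r$ of the white noise), as in Talagrand's and Monrad--Rootz\'en's proofs. Establishing this small-ball asymptotic with the exact constant is the main difficulty of the proof.
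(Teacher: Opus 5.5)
The paper gives no proof of this lemma; it is stated in the Appendix without argument. Your route is the natural one and is sound as far as it goes:
\begin{itemize}
\item the exact covariance $C'[(t+s)^{\widetilde H}-(t-s)^{\widetilde H}]$, i.e.\ a bifractional process with parameters $(\frac12,\widetilde H)$;
\item for (a), the Lamperti transform with $\rho(\tau)=\cosh^{\widetilde H}(\tau/2)-|\sinh(\tau/2)|^{\widetilde H}$, followed by the Pickands/Qualls--Watanabe LIL;
\item for (b), small-ball asymptotics followed by Borel--Cantelli along $\e_n=n^{-n}$.
\end{itemize}
The genuine gap is the step you left as ``should show'': identifying the constants. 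Your argument actually proves $\limsup=\sigma_v:=(\EE[v(1,x)^2])^{1/2}$ in (a) and $\liminf=(2C')^{1/2}\lambda_{\widetilde H}^{\widetilde H/2}$ in (b). To turn these into $\widetilde\kappa$ and $\kappa\lambda_{\widetilde H}^{\widetilde H/2}$ you appeal to \eqref{eq decom}. But that decomposition is only asserted in the paper, and its constant must pass exactly the same test: comparing local increment variances forces $\kappa^2=2C'$. So invoking it just moves the question; it does not answer it.

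The test fails. Finish your own computation with the paper's normalization. Note that the isometry carries $c_{1,1}$, not $c_{1,1}/2\pi$, because the $1/2\pi$ is already inside $c_{1,1}$. You get
\[
C'=\frac{c_{1,1}\,\Gamma(1-\widetilde H)}{\alpha\widetilde H},\qquad \sigma_v^2=2^{\widetilde H}C',
\]
and $\sigma_v^2=\widetilde\kappa^2$ (equivalently $2C'=\Gamma(2\widetilde H)/\Gamma(\widetilde H)$) holds if and only if
\[
\frac{2}{\alpha}\,\Gamma(2H+1)\sin(\pi H)=\Gamma(2\widetilde H+1)\sin(\pi\widetilde H).
\]
For $\alpha=2$ this is true: then $\widetilde H=H$, and it is the reflection formula. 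It fails in the paper's range. For $\alpha=\frac32$, $H=\frac25$, so $\widetilde H=\frac15$, the left side is about $1.18$ and the right side about $0.52$. The displayed $\widetilde\kappa$ and $\kappa$ look like the $\alpha=2$ constants with $H$ replaced by $\widetilde H$.

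What your method correctly establishes is the lemma with $\sigma_v$ and $(2^{1-\widetilde H}\sigma_v^2)^{1/2}\lambda_{\widetilde H}^{\widetilde H/2}$ in place of the stated constants. You should state the result in that form rather than defer to \eqref{eq decom}.

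A smaller point on (b): in the Lei--Nualart decomposition, $X$ is independent of $v$, not of $B^{\widetilde H/2}$, and $v+X\overset{d}{=}cB^{\widetilde H/2}$ with $c=(2C')^{1/2}$. With that, no weak-correlation inequality is needed:
\begin{itemize}
\item Anderson's inequality gives $\mathbb P(\|v\|_\infty\le\epsilon)\ge\mathbb P(\|cB\|_\infty\le\epsilon)$;
\item independence gives $\mathbb P(\|cB\|_\infty\le(1+\delta)\epsilon)\ge\mathbb P(\|v\|_\infty\le\epsilon)\,\mathbb P(\|X\|_\infty\le\delta\epsilon)$.
\end{itemize}
So all you need is $-\log\mathbb P(\|X\|_\infty\le\epsilon)=o(\epsilon^{-2/\widetilde H})$, which your dyadic-shell and self-similarity argument supplies.
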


\end{document}